\DeclareMathAlphabet{\mathcal}{OMS}{cmsy}{m}{n}
\theoremstyle{plain}
\newtheorem{theorem}{Theorem}[section]
\newtheorem{lemma}[theorem]{Lemma}
\newtheorem{proposition}[theorem]{Proposition}
\newtheorem{corollary}[theorem]{Corollary}
\newtheorem*{maintheorem}{Theorem \ref{thm:catenoid charaterization}}
\theoremstyle{definition}
\theoremstyle{remark}
\newtheorem{remark}[theorem]{Remark}
\numberwithin{equation}{section}
\def\@citestyle{\m@th\upshape\mdseries}
\def\citeform#1{{\color{blue}\bfseries#1}}
\def\@cite#1#2{{%
  \@citestyle[\citeform{#1}\if@tempswa, #2\fi]}}
\let\csname cite \endcsname\cite
  \edef\cite{\@nx\protect\@xp\@nx\csname cite \endcsname}%
\newcommand*\rel@kern[1]{\kern#1\dimexpr\macc@kerna}
\newcommand*\widebar[1]{%
  \begingroup
  \def\mathaccent##1##2{%
    \rel@kern{0.8}%
    \overline{\rel@kern{-0.8}\macc@nucleus\rel@kern{0.2}}%
    \rel@kern{-0.2}%
  }%
  \macc@depth\@ne
  \let\math@bgroup\@empty \let\math@egroup\macc@set@skewchar
  \mathsurround\z@ \frozen@everymath{\mathgroup\macc@group\relax}%
  \macc@set@skewchar\relax
  \let\mathaccentV\macc@nested@a
  \macc@nested@a\relax111{#1}%
  \endgroup
}
\renewcommand{\leq}{\leqslant}
\renewcommand{\geq}{\geqslant}
\newcommand{\ddl}[2]{\dfrac{d{#1}}{d{#2}}}
\newcommand{\ddz}[2]{\dfrac{d^2{#1}}{d{#2}^2}}
\newcommand{\R}{\mathbb{R}}
\newcommand{\B}{\mathbb{B}}
\renewcommand{\H}{\mathbb{H}}
\newcommand{\I}{\mathbf{I}}
\renewcommand{\SS}{\mathbb{S}}
\DeclareMathOperator{\diag}{diag}%
\DeclareMathOperator{\SO}{\mathsf{SO}}%
\newcommand{\CC}{\mathcal{C}}
\renewcommand{\O}{\mathsf{O}}%
\newcommand{\Mscr}{\mathscr{M}}
\newcommand{\Rbar}{\widebar{R}}
\newcommand{\Mbar}{\widebar{M}} %need to define the widebar
\newcommand{\gtilde}{\tilde{g}}
\newcommand{\xtilde}{\tilde{x}}
\title[Simons' equation and minimal hypersurfaces]
{Simons' equation and minimal hypersurfaces in space forms}
\begin{document}

\author{Biao Wang}
%\date{}
\thanks{This research was partially supported by
PSC-CUNY Research Award \#{}68119-0046.}

\subjclass[2010]{Primary 53A10, Secondary 53C42}
% 57M05 Low-dimensional topology
% 53A10 Minimal surfaces, surfaces with prescribed mean curvature
% 53C42 Immersions (minimal, prescribed curvature, tight, etc.)
% 53C44 Geometric evolution equations (mean curvature flow)

\date{\today}

\address{Department of Mathematics and Computer Science\\
         The City University of New York, QCC\\
         222-05 56th Avenue, Bayside, NY 11364\\}
\email{biwang@qcc.cuny.edu}

\begin{abstract}
  Let $n\geq{}3$ be an integer, and let $\Sigma^n$ be a non totally geodesic
  complete minimal hypersurface immersed in the
  $(n+1)$-dimensional space form $\Mbar^{n+1}(c)$, where the constant
  $c$ denotes the sectional curvature of the space form. If $\Sigma^n$
  satisfies the Simons' equation \eqref{eq:Simons equation}, then either
  (1) $\Sigma^n$ is a catenoid if $c\leq{}0$, or (2) $\Sigma^n$ is a
  Clifford minimal hypersurface or a compact Ostuki minimal hypersurface
  if $c>0$. This paper is motivated by Tam and Zhou \cite{TZ09}.
\end{abstract}

\maketitle

\section{Introduction}

In 1968 Simons \cite{Sim68} (see also \cite[$\S$7 or $\S$9]{Che68} and
\cite[$\S$1.6]{Xin03}) showed that the second fundamental form of an immersed
minimal hypersurface in the sphere or in the Euclidean space
satisfies a second order elliptic partial differential equation,
which can imply the famous Simons' inequality.
%(see $\S$\ref{sec:Simons_equation} for the case when the ambient manifold is
%a space form and \cite{SSY75} for the case of general ambient manifolds).
The Simons' inequality enabled him to prove a gap phenomenon for minimal submanifolds
in the sphere and the Bernstein's problem in $\R^n$ for $n\leq{}7$.
Since then the Simons' inequality has been used by various authors to study minimal
immersions.

%Roughly speaking, a \emph{catenoid} $\CC$ in the  space form $\Mbar^{n+1}(c)$
%is a complete minimal rotation hypersurface, which is diffeomorphic to
%either $\SS^{n-1}\times\R$ if $c\leq{}0$ or $\SS^{n-1}\times{}\SS^{1}$ if $c>0$.
In this paper, we shall classify all complete minimal hypersurfaces
immersed in the space form $\Mbar^{n+1}(c)$ which satisfy the Simons' equation
\eqref{eq:Simons equation}, where $n\geq{}3$.

Roughly speaking, a \emph{catenoid} is a minimal
rotation hypersurface immersed in $\Mbar^{n+1}(c)$.
In the case when $c=0$, Tam and Zhou \cite[Theorem 3.1]{TZ09} proved that if
a non-flat complete minimal hypersurface $\Sigma^n$ immersed in $\R^{n+1}$ satisfies
the Simons' equation \eqref{eq:Simons equation} on all nonvanishing points
of $|A|$, then $\Sigma^n$ must be a catenoid.

Motivated by the ideas of Tam and Zhou, we generalize the result to the cases
when $c\ne{}0$. More precisely we will prove the following theorem.

\begin{theorem}
\label{thm:catenoid charaterization}
Let $\Mbar^{n+1}(c)$ be the space form of dimension $n+1$, where
$n\geq{}3$. Suppose that $\Sigma^n$ is a
non totally geodesic complete minimal hypersurface immersed in $\Mbar^{n+1}(c)$.
If the Simons' equation \eqref{eq:Simons equation} holds as an equation at
all nonvanishing points of $|A|$ in $\Sigma^n$,
then $\Sigma^n\subset\Mbar^{n+1}(c)$ is either
\begin{enumerate}
  \item a catenoid if $c\leq{}0$, or
  \item a Clifford minimal hypersurface or a compact Ostuki
        minimal hypersurface if $c>0$.
\end{enumerate}
\end{theorem}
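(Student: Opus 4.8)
The strategy is to reduce the PDE hypothesis to an ODE by exploiting the structure forced by equality in Simons' identity, following the philosophy of Tam–Zhou but keeping track of the ambient curvature $c$. Simons' equation is the statement that $\frac12\Delta|A|^2 = |\nabla A|^2 + |A|^2(nc - |A|^2)$ holds as an \emph{equality} rather than the usual inequality coming from $|\nabla A|^2 \geq (1 + \tfrac{2}{n+2})|\nabla|A||^2$ together with the Kato-type estimate, at points where $|A|\neq 0$. So the first step is to recall precisely which algebraic inequalities on $\nabla A$ and on the eigenvalues of $A$ are used to derive Simons' \emph{inequality}, and then to write down exactly what equality in each of them forces pointwise. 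I expect this to pin down the shape operator: at each point where $|A|\neq 0$ the second fundamental form should have only two distinct eigenvalues, one of them simple (or the eigenvalue structure of a Clifford/Otsuki type), and the gradient $\nabla A$ should be of the special rank-one form that appears in the equality case of Kato's inequality, i.e.\ $\nabla_i h_{jk}$ is essentially controlled by $\nabla|A|$ in a single direction.

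Next I would translate this pointwise algebraic rigidity into a foliation/structure statement. Once $A$ has exactly two eigenvalues with fixed multiplicities $1$ and $n-1$ (the generic ``rotationally symmetric'' case), the distribution given by the multiplicity-$(n-1)$ eigenspace is integrable with totally umbilic (in fact, by the Codazzi equation in a space form, totally geodesic or spherical) leaves, and the orthogonal complement gives a geodesic ``profile'' direction. This is the classical local description of rotation hypersurfaces (do Carmo–Dajczer in the space-form setting); the equality in Simons then becomes an ODE for the two principal curvatures along the profile geodesic. So the second step is: (a) show the eigenvalue multiplicities are globally constant on the open set $\{|A|\neq 0\}$, (b) invoke the de Carmo–Dajczer classification to say $\Sigma$ is locally a rotation hypersurface, and (c) derive and solve the resulting ODE, identifying the solutions with the catenoid profiles when $c\leq 0$, and with the Clifford tori / Otsuki hypersurfaces when $c>0$ (where compactness and the closing-up of the profile curve enters).

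Then comes the global completeness argument: I must rule out that $\{|A|=0\}$ is nonempty (a nonempty zero set would, by unique continuation for the elliptic Simons equation or by the ODE analysis, force $\Sigma$ totally geodesic, contradicting the hypothesis), and I must check that the local rotation-hypersurface pieces patch into a single complete catenoid (resp.\ Clifford/Otsuki hypersurface) rather than stopping prematurely — this is where completeness of $\Sigma$ does the work, guaranteeing the profile geodesic is defined for all time and the solution of the ODE is the global one. The case split $c\leq 0$ versus $c>0$ is genuine here: for $c>0$ the profile ODE on $\SS^{n+1}$ can have periodic solutions (Otsuki) or the constant-curvature-ratio solutions (Clifford), whereas for $c\leq 0$ the qualitative behavior of the ODE forces the noncompact catenoid.

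\medskip

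\noindent\textbf{Main obstacle.} The hard part will be step (a)--(c) of the middle paragraph: extracting, from equality in Simons' inequality, the \emph{clean} statement that the shape operator is that of a rotation hypersurface with constant eigenvalue multiplicities, and then correctly setting up the profile ODE \emph{with the ambient curvature term}. The algebra in the equality case of Simons' identity is delicate — one has to handle the possibility of more than two distinct principal curvatures and show it cannot occur under the equality constraint — and the ODE analysis for $c>0$, distinguishing Clifford from Otsuki and controlling when the profile closes up, is the most technical point. Ensuring the local-to-global passage is valid (no breakdown of the eigenvalue structure across $\{|A|=0\}$, completeness genuinely forcing the maximal profile) is the other place where care is needed.
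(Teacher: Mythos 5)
Your overall route --- equality in the Simons identity forces the algebraic error term $E$ in \eqref{eq:Simons identity} to vanish, which pins down the principal-curvature structure, which in turn makes $\Sigma^n$ locally a rotation hypersurface, after which a completeness/unique-continuation argument finishes --- is essentially the paper's route. But there is one genuine gap in your plan: the dichotomy on which everything hinges is whether $|\nabla|A||$ vanishes identically on the open set where $|A|>0$, and your proposal does not isolate it. The term $E_3$ in \eqref{eq:error term of Simons equation} only yields the eigenvalue rigidity (one eigenvalue $\lambda$ of multiplicity $n-1$, the other $-(n-1)\lambda$ simple) at points where $\nabla|A|\ne 0$; where $A$ is parallel, $E=0$ holds automatically and gives \emph{no} information about the multiplicities. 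In that branch the paper instead feeds $|A|=\mathrm{const}$ into \eqref{eq:Simons equation} to get $|A|^2=nc$, which is a contradiction for $c\le 0$ and, for $c>0$, triggers the Chern--do Carmo--Kobayashi/Lawson rigidity theorem \cite{CdCK70,Law69} to conclude $\Sigma^n=\Mscr_{m,n-m}(c)$. This is not a cosmetic point: the Clifford hypersurfaces with $2\le m\le n-2$ have principal curvature multiplicities $(m,n-m)$ and are \emph{not} rotation hypersurfaces, so they can never arise from your profile ODE; a proof that tries to obtain the Clifford examples as ``constant-curvature-ratio solutions'' of the rotation ODE will only ever find $\Mscr_{1,n-1}$ and cannot close the classification.

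The rest of your plan matches the paper's proof in substance: on the set where $|A|>0$ and $\nabla|A|\ne0$, the computation of \cite{TZ09} gives the $(n-1,1)$ eigenvalue structure, and \cite[Theorem 5]{Ots70} together with \cite[Corollary 4.4]{dCD83} identifies a neighborhood with part of a catenoid (so you need not re-derive and solve the profile ODE yourself, though doing so is a legitimate, more self-contained alternative); the maximum principle / analyticity propagates this to all of $\Sigma^n$; and completeness is used exactly as you say --- for $c\le0$ via a covering-space argument onto the simply connected embedded catenoid (here $n\ge 3$ matters), for $c>0$ via Otsuki's closing-up condition on the period of the profile curve. One minor slip: the refined Kato constant for minimal hypersurfaces here is $1+\tfrac{2}{n}$, as in \eqref{eq:simons02}, not $1+\tfrac{2}{n+2}$.
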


\begin{remark}
The compact minimal rotation hypersurfaces in
Theorem \ref{thm:otsuki} are called the \emph{Otsuki minimal hypersurfaces}
(see \cite{Ots70}). The Otsuki minimal hypersurfaces are \emph{immersed}
catenoids.
\end{remark}

\begin{remark}
Combing Proposition \ref{prop:Clifford_Simons_equation}
and Proposition \ref{prop:catenoid_Simons identity} with
Theorem \ref{thm:catenoid charaterization},
we can see that the Clifford minimal hypersurfaces and the
catenoids are the \emph{only} complete minimal hypersurfaces
satisfying the Simons' equation \eqref{eq:Simons equation}.
\end{remark}

\begin{remark}We say that $\Sigma^n$ is a \emph{complete} minimal hypersurface
in the space form $\Mbar^{n+1}(c)$, we actually mean one of the following cases:
\begin{enumerate}
  \item if $c\leq{}0$, then $\Sigma^n$ is a noncompact hypersurface
        without boundary, that is, an open hypersurface, or
  \item if $c>0$, then $\Sigma^n$ is a compact hypersurface without
        boundary, that is, a closed hypersurface.
\end{enumerate}
\end{remark}

\noindent\textbf{Plan of the paper.}
This paper is organized as follows:
In $\S$\ref{sec:prelim} we define the catenoids and their generating curves
in the space forms.
In $\S$\ref{sec:Simons_equation} we derive the Simons' identity
\eqref{eq:Simons identity}, and we show that the Clifford minimal hypersurfaces
and the catenoids satisfy \eqref{eq:Simons equation}.
In $\S$\ref{sec:Proof_Main_Theorem}
we prove Theorem \ref{thm:catenoid charaterization}.
In $\S$\ref{sec:appendix} we offer some figures of the generating curves
of the catenoids in the space forms.

\section{Preliminary}\label{sec:prelim}

A simply connected $(n+1)$-dimensional complete Riemannian manifold
whose sectional curvature is equal to a constant $c$,
denoted by $\Mbar^{n+1}(c)$, is called a \emph{space form}.
There are three types of space forms:

(i) If $c>0$, let
\begin{equation}
   \Mbar^{n+1}(c)=\SS^{n+1}(c)=\{x\in\R^{n+2}\ |\
   x_{1}^{2}+\cdots+x_{n+2}^{2}=1/c\}\ .
\end{equation}

(ii) If $c<0$, let
\begin{equation}
   \Mbar^{n+1}(c)=\B^{n+1}(c)=
   \{x\in\R^{n+1}\ |\ x_{1}^{2}+\cdots+x_{n+1}^{2}<-1/c\}\ ,
\end{equation}
with the metric
\begin{equation}
   ds^{2}=\frac{4|dx|^{2}}{(1+c|x|^2)^{2}}\ ,
\end{equation}
where $x=(x_{1},\ldots,x_{n+1})$ and $|x|^{2}=x_{1}^{2}+\cdots+x_{n+1}^{2}$.

(iii) If $c=0$, let
\begin{equation}
   \Mbar^{n+1}(0)=\R^{n+1}
\end{equation}
be the $(n+1)$-dimensional Euclidean space.

%In $\S$\ref{subsec:Clifford_torus} and $\S$\ref{subsec:Otsuki's curves},
In Theorem \ref{Thm:Clifford_minimal_hypersurface} and Theorem \ref{thm:otsuki},
the results that we quote are about minimal hypersurfaces immersed in the
unit sphere $\SS^{n+1}$, but these results can be generalized to the
space forms $\SS^{n+1}(c)$ for $c>0$.

In fact, consider both $\SS^{n+1}$ and $\SS^{n+1}(c)$ ($c>0$) as the subsets of
$\R^{n+2}$. Define the map $f:\R^{n+2}\to\R^{n+2}$
by $f(x)={x}/{\sqrt{c}}$ for any $x\in\R^{n+2}$, where $c>0$. It's easy to verify
$\SS^{n+1}(c)=f(\SS^{n+1})$. For any hypersurface $\Sigma^{n}$
immersed in $\SS^{n+1}$, let $\Sigma^{n}(c)=f(\Sigma^{n})$, then $\Sigma^{n}(c)$ is
a hypersurface immersed in $\SS^{n+1}(c)$.

\begin{lemma}\label{lem:conformal_minimal}
If $\Sigma^{n}$ is a minimal hypersurface immersed in $\SS^{n+1}$,
then $\Sigma^{n}(c)$ is a minimal hypersurface immersed in $\SS^{n+1}(c)$.
\end{lemma}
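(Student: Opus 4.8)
The plan is to exploit the fact that $f\colon\R^{n+2}\to\R^{n+2}$, $f(x)=x/\sqrt{c}$, is a homothety, i.e. it scales the Euclidean metric by the constant factor $1/c$, and that homotheties preserve the property of being minimal. Concretely, if $\iota\colon\Sigma^n\hookrightarrow\SS^{n+1}\subset\R^{n+2}$ is the given isometric immersion, then $f\circ\iota\colon\Sigma^n(c)\hookrightarrow\SS^{n+1}(c)\subset\R^{n+2}$ is an immersion whose induced metric is $1/c$ times the original induced metric on $\Sigma^n$. First I would record that the ambient restriction of $f$ to $\SS^{n+1}$ is precisely the diffeomorphism onto $\SS^{n+1}(c)$ appearing in the paragraph before the lemma, and that it is a homothety with factor $1/c$ for the respective round metrics; this follows since $\SS^{n+1}(c)$ carries the metric induced from $\R^{n+2}$ (its sectional curvature being $c$).

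The key computational step is the transformation rule for the second fundamental form under a homothety. If $g\mapsto\lambda^2 g$ rescales the metric on both $\Sigma^n$ and the ambient space by the same constant $\lambda^2$ (here $\lambda^2=1/c$), then the Levi-Civita connections are unchanged, the unit normal $\nu$ is rescaled to $\lambda^{-1}\nu$ (to keep it unit length in the new ambient metric), hence the scalar second fundamental form satisfies $h_{ij}\mapsto \lambda\, h_{ij}$ while the inverse metric satisfies $g^{ij}\mapsto \lambda^{-2} g^{ij}$. Therefore the mean curvature transforms as
\begin{equation}
  H \;=\; g^{ij} h_{ij} \;\longmapsto\; \lambda^{-2} g^{ij}\cdot \lambda\, h_{ij} \;=\; \lambda^{-1} H.
\end{equation}
In particular $H\equiv 0$ on $\Sigma^n$ forces the mean curvature of $\Sigma^n(c)$ in $\SS^{n+1}(c)$ to vanish identically, which is exactly the assertion. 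I would also note in passing that the full second fundamental form rescales by $\lambda$, so $|A|^2$ rescales by $\lambda^2=1/c$ — a fact that will be convenient later when transferring the Simons equation between $\SS^{n+1}$ and $\SS^{n+1}(c)$.

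I do not expect any genuine obstacle here; the only point requiring a little care is bookkeeping the two simultaneous rescalings (intrinsic and extrinsic), since a homothety of the ambient space induces a homothety of the same factor on an immersed submanifold, so the relative geometric quantities transform by the ``relative'' scaling law above rather than by the naive ambient law. An alternative, essentially equivalent, route would be to invoke directly the standard fact that minimality is preserved by homotheties of the ambient Riemannian manifold (minimal submanifolds are critical points of the area functional, and a homothety rescales area by a constant, hence preserves criticality); with that cited, the lemma is immediate. I would present the short connection-and-normal computation as the proof, since it is self-contained and makes the scaling factor explicit.
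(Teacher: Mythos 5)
Your argument is correct, but it is genuinely different from the one in the paper. The paper does not touch the second fundamental form at all: it invokes Takahashi's theorem, which characterizes minimal immersions into spheres by the condition $\Delta_{\Sigma^{n}}x=-nx$ on the position vector, observes that the Laplacians satisfy $\Delta_{\Sigma^{n}(c)}=c\Delta_{\Sigma^{n}}$ under the scaling $\gtilde_{ij}=g_{ij}/c$, and then computes $\Delta_{\Sigma^{n}(c)}\xtilde=-nc\,\xtilde$ for $\xtilde=x/\sqrt{c}$, which by Takahashi again gives minimality in $\SS^{n+1}(c)$. Your route --- the transformation law for $h_{ij}$, $g^{ij}$ and the unit normal under a constant homothety, giving $H\mapsto\lambda^{-1}H$ --- is more elementary and more general (it works for homotheties of any ambient manifold, not just spheres), at the cost of the normal-vector bookkeeping you rightly flag; the paper's route is shorter but leans on a nontrivial external theorem specific to spherical ambients. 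One small correction to your closing aside: since $|A|^2=g^{ik}g^{jl}h_{ij}h_{kl}$, it rescales by $\lambda^{-4}\cdot\lambda^{2}=\lambda^{-2}=c$, not by $\lambda^{2}=1/c$; this is consistent with the Clifford hypersurfaces having $|A|^2=n$ in $\SS^{n+1}$ and $|A|^2=nc$ in $\SS^{n+1}(c)$, and the direction of the scaling matters if you later transfer the Simons' equation between the two ambients.
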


\begin{proof}Let $g_{ij}$ and $\gtilde_{ij}$ be the first fundamental forms
of the hypersurfaces $\Sigma^{n}\subset\SS^{n+1}$ and
$\Sigma^{n}(c)\subset\SS^{n+1}(c)$ respectively, then
$\gtilde_{ij}=g_{ij}/c$ and $\gtilde^{ij}=cg^{ij}$ for $1\leq{}i,j\leq{}n$.
It's easy to verify that the Laplacians on $\Sigma^{n}$ and $\Sigma^{n}(c)$
satisfy the equation $\Delta_{\Sigma^{n}(c)}=c\Delta_{\Sigma^{n}}$.
Let $x$ and $\xtilde$ be the position functions of $\Sigma^{n}$ and
$\Sigma^{n}(c)$ in $\R^{n+2}$ respectively, then $\xtilde=x/\sqrt{c}$.

If $\Sigma$ is a minimal hypersurface immersed in $\SS^{n+1}$,
then by Theorem 3 in \cite{Tak66} (see also \cite[p.101]{dCW70} or
\cite[Theorem 3.10.2]{Ji04})
we have $\Delta_{\Sigma^{n}}x=-nx$. On the other hand,
we have the following identities
\begin{equation*}
   \Delta_{\Sigma^{n}(c)}\xtilde=c\Delta_{\Sigma^{n}}\left(\frac{x}{\sqrt{c}}\right)
   =\sqrt{c}\,\Delta_{\Sigma^{n}}x=\sqrt{c}\,(-nx)=-nc\xtilde\ ,
\end{equation*}
which can imply that $\Sigma^{n}(c)$ is a minimal hypersurface immersed
in $\SS^{n+1}(c)$ by applying \cite[Theorem 3]{Tak66} again.
\end{proof}

As we will see that any complete minimal hypersurface
$\Sigma^n$ immersed in $\Mbar^{n+1}(c)$ satisfying the equation
\eqref{eq:Simons equation} at all nonvanishing points of $|A|$
is a minimal rotation hypersurface unless $\Sigma^n$ is a Clifford minimal
hypersurface in the case when $c>0$, so at first we shall study the
Clifford minimal hypersurfaces and minimal rotation hypersurfaces in the
space forms.

\iffalse
In $\S$\ref{subsec:Clifford_torus} we define the Clifford minimal
hypersurfaces in the space form $\SS^{n+1}(c)$ for $c>0$.
In $\S$\ref{subsec:rotation_hypersurface}
we follow Hsiang \cite{Hsi82,Hsi83} to
derive the equations of the generating curves of the minimal
rotation hypersurfaces in $\Mbar^{n+1}(c)$ for $c\leq{}0$.
%In $\S$\ref{subsubsec:Euclidean_catenoids} and
%$\S$\ref{subsubsec:Hyperbolic_catenoids}, we will write down
%the explicit equations for the generating curves when $c\leq{}0$.
For the case when $c>0$, the situation is more complicated,
hence in $\S$~\ref{subsec:Otsuki's curves} we will follow Otsuki
\cite{Ots70,Ots72} to define the generating
curves of the of the minimal rotation hypersurfaces immersed
in $\SS^{n+1}$ (see also \cite{BL90} in \emph{equivariant language}).
\fi

\subsection{Clifford minimal hypersurfaces in $\SS^{n+1}(c)$}
\label{subsec:Clifford_torus}

In this subsection, we shall define the Clifford minimal hypersurfaces
in the space forms $\SS^{n+1}(c)$ for $c>0$.
Let $S^q(r)$ be a $q$-dimensional sphere in $\R^{q+1}$ with
radius $r$. In particular $\SS^{q}(c)=S^{q}(1/\sqrt{c})$ for $c>0$.

%Let $m$ and $n$ be positive integers such that $m<n$.
For $c>0$ and $m=1,\ldots,n-1$,
a \emph{Clifford minimal hypersurface} embedded in $\SS^{n+1}(c)$ is
defined as follows
\begin{equation}\label{eq:Clifford hypersurfaces}
   \Mscr_{m,n-m}(c)=S^{m}\left(\sqrt{\frac{m}{cn}}\right)\times
   S^{n-m}\left(\sqrt{\frac{n-m}{cn}}\right)\ .
\end{equation}
In particular, $\Mscr_{m,n-m}=\Mscr_{m,n-m}(1)$ is a Clifford minimal
hypersurface embedded in $\SS^{n+1}$ (see also
\cite{Che68} and \cite[pp.229--230]{Ji04}).
The following result is well known.

%We may embed the Clifford minimal hypersurface $\Mscr_{m,n-m}$ into
%$\SS^{n+1}$ as follows.
%Let $(u, v)$ be a point of $\Mscr_{m,n-m}$ where $u$ is a vector in $\R^{m+1}$
%of length $\sqrt{m/n}$, and $v$ is a vector in $\R^{n-m+1}$ of length
%$\sqrt{(n-m)/n}$. We can consider $(u, v)$ as a vector in
%$\R^{n+2} =\R^{m+1}\times\R^{n-m+1}$ of length $1$.

%The following result is well known.
%and it can be applied to the general case when $c>0$ according to
%Lemma \ref{lem:conformal_minimal}.

\begin{theorem}[{\cite{CdCK70,Law69}}]
\label{Thm:Clifford_minimal_hypersurface}
The Clifford minimal hypersurfaces $\Mscr_{m,n-m}$ are the only compact
minimal hypersurfaces in $\SS^{n+1}$ with $|A|^2=n$.

Furthermore the second fundamental form $A$ has two distinct constant
eigenvalues with multiplicities $m$ and $n$ respectively.
\end{theorem}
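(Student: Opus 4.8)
The plan is to run the classical Simons-inequality argument in its rigidity (equality) form. First I would invoke the Simons identity for a minimal hypersurface $\Sigma^n$ immersed in $\SS^{n+1}$, namely
\[
  \tfrac{1}{2}\Delta|A|^2 = |\nabla A|^2 + |A|^2\bigl(n-|A|^2\bigr).
\]
Under the hypothesis $|A|^2\equiv n$ the left-hand side vanishes identically and the term $|A|^2(n-|A|^2)$ vanishes, so $|\nabla A|^2\equiv 0$; hence the second fundamental form is parallel, $\nabla A=0$.

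Next I would extract the geometric consequences of $\nabla A=0$. Because $A$ is a parallel symmetric endomorphism field, its eigenvalues $\lambda_1,\dots,\lambda_n$ are locally constant on $\Sigma$ and the eigendistributions are smooth, pairwise orthogonal, and parallel; by the de Rham decomposition theorem the universal cover of $\Sigma$ splits isometrically as a Riemannian product indexed by the distinct eigenvalues, each factor being totally geodesic in $\Sigma$ and, via the Gauss equation, of constant sectional curvature. Since $\Sigma$ is minimal with $|A|^2=n\neq 0$ it is not totally geodesic, and a totally umbilic minimal hypersurface of $\SS^{n+1}$ is totally geodesic, so $A$ has at least two distinct eigenvalues. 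Feeding the splitting back into the Gauss and Codazzi equations (the ambient curvature being the constant $1$) shows that there can be at most two distinct eigenvalues and that the two factors are round spheres --- this is Lawson's classification of hypersurfaces of $\SS^{n+1}$ with parallel second fundamental form. Together with completeness this yields $\Sigma^n=S^m(a)\times S^{n-m}(b)$ with $a^2+b^2=1$, and $A$ has exactly the two distinct constant principal curvatures $b/a$ and $-a/b$ with multiplicities $m$ and $n-m$.

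Finally I would impose minimality to pin down the radii: $m(b/a)=(n-m)(a/b)$, i.e.\ $m b^2=(n-m)a^2$, which with $a^2+b^2=1$ forces $a=\sqrt{m/n}$ and $b=\sqrt{(n-m)/n}$; comparing with \eqref{eq:Clifford hypersurfaces} (taking $c=1$) identifies $\Sigma^n$ with $\Mscr_{m,n-m}$. As a check, $|A|^2 = m(b/a)^2+(n-m)(a/b)^2 = (n-m)+m = n$, consistent with the hypothesis.

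The hard part is the middle step: upgrading $\nabla A=0$ to the rigid statement that $\Sigma$ is a product of exactly two round spheres. The de Rham splitting is formal, but ruling out three or more eigenvalue-factors and identifying each surviving factor as a sphere of the prescribed radius requires combining the Gauss equation for the constant-curvature ambient with the Codazzi equation (which for parallel $A$ severely restricts which eigenvalues can coexist on overlapping distributions); this is exactly the computation carried out in \cite{CdCK70} and \cite{Law69}, and it is the one genuinely nontrivial ingredient.
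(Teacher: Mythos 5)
The paper offers no proof of this theorem---it is quoted verbatim from \cite{CdCK70} and \cite{Law69}---and your outline is exactly the standard Simons-identity rigidity argument of those references ($|A|^2\equiv n$ forces $\nabla A=0$, then parallel $A$ plus Gauss--Codazzi forces a product of two round spheres whose radii are pinned down by minimality), with the one genuinely nontrivial step correctly identified and attributed; so this is essentially the same approach and is correct as a proof sketch at the level the paper itself operates. Note that your conclusion also silently corrects a typo in the statement as printed: the multiplicities of the two eigenvalues are $m$ and $n-m$, not $m$ and $n$.
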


\subsection{Catenoids in space forms}
\label{subsec:rotation_hypersurface} % $\Mbar^{n+1}(c)$ %
In this subsection we shall follow Hsiang \cite{Hsi82,Hsi83} to derive the
differential equations of the generating curves of catenoids in the space form
$\Mbar^{n+1}(c)$, and solve the differential equations in the case when $c\leq{}0$.

Let $G=\SO(n)$ be a subgroup of the orientation preserving
isometry group of $\Mbar^{n+1}(c)$ which pointwise fixes a
given geodesic $M^1\subset{}\Mbar^{n+1}(c)$.
We call $G$ the \emph{spherical group} of $\Mbar^{n+1}(c)$ and
$M^1$ the \emph{rotation axis} of $G$. A hypersurface in
$\Mbar^{n+1}(c)$ that is invariant under $G$ is called a
\emph{rotation hypersurface}; %or a \emph{hypersurface of revolution};
if the rotation hypersurface in $\Mbar^{n+1}(c)$ is a complete
minimal hypersurface, then it is called a \emph{spherical catenoid} or just
\emph{catenoid}, denoted by $\CC$.

The following $2$-dimensional half space is well defined
\begin{equation}\label{eq:half space}
   M_{+}^2(c)=\Mbar^{n+1}(c)/G \ .
\end{equation}
There are three types of the half spaces:
\begin{enumerate}
  \item $\SS_{+}^{2}(c)=\{x_{1}^{2}+x_{n+1}^{2}+x_{n+2}^{2}=1/c\ |\
         x_{1}\geq{}0\}=M_{+}^{2}(c)$ if $c>0$.
  \item $\B_{+}^{2}(c)=\{x_{1}^{2}+x_{n+1}^{2}<-1/c\ |\ x_{1}\geq{}0\}=M_{+}^{2}(c)$
        if $c<0$.
  \item $\R_{+}^{2}=\{(x_{1},x_{n+1})\in\R^2\ |\ x_1\geq{}0\}=M_{+}^{2}(0)$ if
        $c=0$.
\end{enumerate}
It's easy to see that the rotation axis $M^1$ is the boundary of $M_{+}^2(c)$.

The orbital distance metric on $\Mbar^{n+1}(c)/G$ is the same
as the restriction metric of $M_{+}^2(c)$. Let $d(\cdot,\cdot)$
be the distance function defined on $M_{+}^2(c)$.
We shall parametrize $M_{+}^2(c)$ by the following coordinate
system: Choose a base point $O\in{}M^1$ and let $x$ be the arc
length on $M^1$ travelling in the positive orientation of
$M^1= \partial{}M_{+}^2(c)$. To each point $p\in{}M_{+}^2(c)$,
there is a (unique) point $q\in{}M^1$ such that the length of
the geodesic arc connecting $p$ and $q$, denoted by
$\overline{pq}$, is equal to $d(p,M^1)$. We shall assign to the
point $O$ the coordinate $(x,y)$, where $x=d(O,q)$ and
$y=d(p,q)=$ the length of the geodesic arc $\overline{pq}$
(see Figure \ref{fig:warped product metric}).

%=================================================================
\begin{figure}[htbp]
\begin{center}
  \begin{minipage}{.45\textwidth}
  \centering
  \includegraphics[scale=0.8]{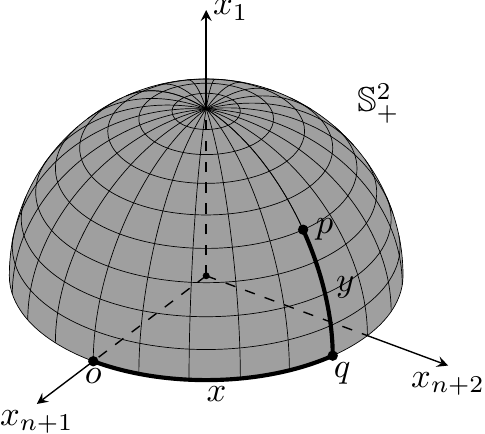}
  \end{minipage}%
  \begin{minipage}{.55\textwidth}
  \centering
  \includegraphics[scale=0.85]{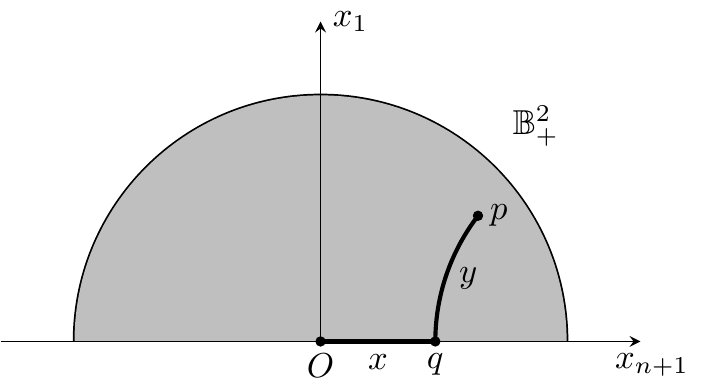}
  \end{minipage}
\end{center}
\caption{The warped product metric on the half space
    $M_{+}^2(c)$ for $c=\pm{}1$. In each half space,
    $x=d(O,q)$ and $y=d(p,q)=$ the length of the geodesic
    arc $\overline{pq}$.}\label{fig:warped product metric}
\end{figure}
%==================================================================

According to the above definition of $x$ and $y$, we have
\begin{equation*}
\begin{cases}
     -\infty<x<\infty\ \text{and}\
     0\leq{}y<\infty\ ,
   & \text{if}\ c\leq{}0\ , \\
     -\dfrac{\pi}{\sqrt{c}}\leq{}x<\dfrac{\pi}{\sqrt{c}}
     \ \text{and}\ 0\leq{}y\leq\dfrac{\pi}{2\sqrt{c}}\ ,
   & \text{if}\ c>0\ .
\end{cases}
\end{equation*}
In the case $c>0$, the coordinate of the center is
$\left(x,\dfrac{\pi}{2\sqrt{c}}\right)$, where $x$ is arbitrary.
The \emph{warped product metric} on $M_{+}^2(c)$ is
written in the form
\begin{equation}\label{eq:warped product metric}
   ds^2=(f'(y))^{2}\cdot{}dx^2+dy^2\ ,
\end{equation}
where $f'=df/dy$ and
\begin{equation}
   f(y)=
   \begin{cases}
        \dfrac{1}{\sqrt{-c}}\,\sinh(\sqrt{-c}\,y)\ ,
      & \text{if}\ c<0\ (\text{hyperbolic case})\ ,\\
        y\ ,
      & \text{if}\ c=0\ (\text{Euclidean case})\ ,\\
        \dfrac{1}{\sqrt{c}}\,\sin(\sqrt{c}\,y)\ ,
      & \text{if}\ c>0\ (\text{spherical case})\ .
   \end{cases}
\end{equation}

Let $\Sigma^n$ be a rotation hypersurface in $\Mbar^{n+1}(c)$
with respect to the geodesic $M^1$, then the curve
$\gamma=\Sigma^n\cap{}M_{+}^2(c)$ is called the
\emph{generating curve} of $\Sigma^n$.
Suppose that $\gamma$ is given by the parametric equations:
$x = x(s)$ and $y = y(s)$, $a\leq{}s\leq{}b$, where
$s$ is the arc length of $\gamma$ and $y(s)>0$. Let
$\alpha$ be the angle between the unit tangent vector
of $\gamma(s)$ and $\partial{}/\partial{}y$ (see Figure
\ref{fig:diff eq of gamma}).

%===================================================================
\begin{figure}[htbp]
  \begin{center}
     \includegraphics[scale=0.8]{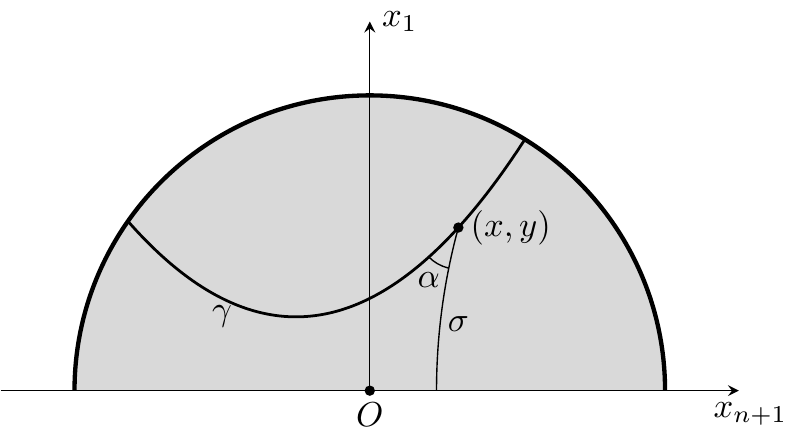}
   \end{center}
  \caption{In the hyperbolic half space $\B_{+}^2$,
  $\alpha$ is the angle between the parametrized
  curve $\gamma$ and the geodesic $\sigma$ at the point
  $(x,y)$, where $\sigma$ is
  perpendicular to the $x_{n+1}$-axis.}\label{fig:diff eq of gamma}
\end{figure}
%===================================================================

Now suppose that the mean curvature of the rotation hypersurface
$\Sigma^n\subset\Mbar^{n+1}(c)$ generated by the curve $\gamma\subset{}M_{+}^2(c)$
is zero, then we get the following differential equations of $\gamma$
(see \cite[pp. 487--488]{Hsi82} for the details)
\begin{equation}\label{eq:diff equation of gamma}
   \frac{f^{n-1}\cdot{}(f')^2}{\sqrt{(f')^2+(y')^2}}=
   f^{n-1}\cdot{}f'\cdot{}\sin\alpha=k\ (\text{constant})\ ,
\end{equation}
where $y'=dy/dx$.

Without loss of generality we may consider the differential equations
\eqref{eq:diff equation of gamma} with initial data
$y(0)=a>0$ and $y'(0)=0$. Plugging the initial conditions
into \eqref{eq:diff equation of gamma}, we get
$k = f^{n-1}(a)f'(a)$, which implies the following equation
\begin{equation}\label{eq:bounded condition}
   \sin\alpha=\frac{f^{n-1}(a)f'(a)}{f^{n-1}(y)f'(y)}\ .
\end{equation}
Now we assume $c\leq{}0$, then we can write $dx/dy$ as follows
\begin{equation*}
   \ddl{x}{y}=\frac{1}{f'(y)}\cdot
        \frac{dy}{\sqrt{\left(\dfrac{f(y)}{f(a)}\right)^{2n-2}
        \cdot\left(\dfrac{f'(y)}{f'(a)}\right)^2-1}}\ .
\end{equation*}
Integrating both sides in terms of $y$, we have the following function
\begin{equation}\label{eq:generating_curve_function}
   x(y)=\int_{a}^{y}\frac{1}{f'(t)}\cdot
        \frac{dt}{\sqrt{\left(\dfrac{f(t)}{f(a)}\right)^{2n-2}
        \cdot\left(\dfrac{f'(t)}{f'(a)}\right)^2-1}}\ ,
\end{equation}
where $a\leq{}y<\infty$.

%Next we can describe the higher dimension catenoids in the space forms
%with non-positive sectional curvature explicitly.

%-----------------------------------------------------------------------
\subsection{Catenoids in $\SS^{n+1}(c)$}\label{subsec:Otsuki's curves}

In this subsection we follow Otsuki \cite{Ots70}
to study the generating curves of the compact immersed
minimal rotation hypersurfaces in $\SS^{n+1}$.
See also \cite{BL90} in \emph{equivariant language}.

%===================================================================
\begin{figure}[htbp]
  \begin{center}
     \includegraphics[scale=0.8]{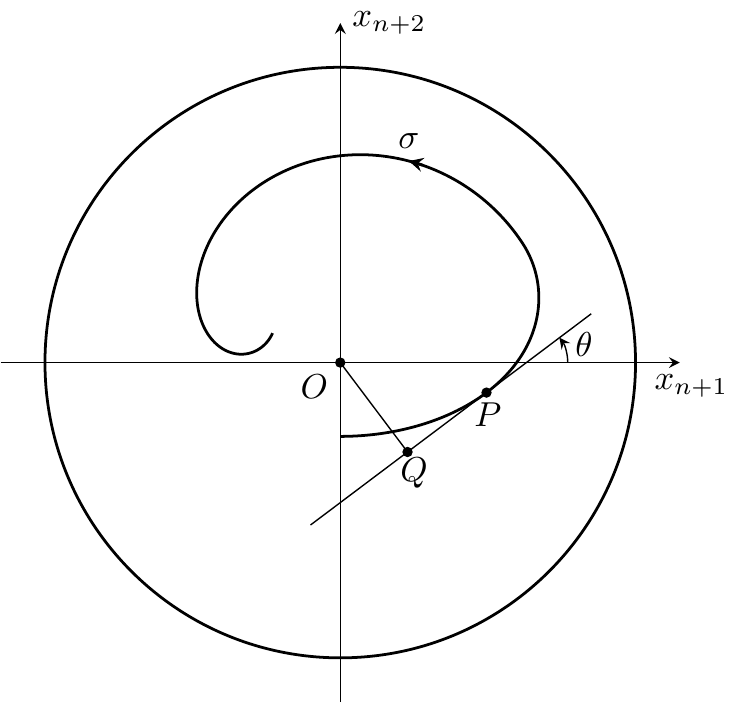}
   \end{center}
  \caption{The generating curve for the spherical catenoid in $\SS^{n+1}$,
  its support function is $h(\theta)=d(O,Q)$ and $h'(\theta)=d(P,Q)$.
  The coordinates $(x_{n+1},x_{n+2})$ of the point $P$ (inside the unit disk)
  are given by $x_{n+1}=h\sin\theta+h'\cos\theta$ and
  $x_{n+2}=-h\cos\theta+h'\sin\theta$ (see equation (4.2) in \cite{Ots70}).}
  \label{Fig: support function}
\end{figure}
%===================================================================

Suppose that $\gamma\subset\SS_{+}^2$ is the generating curve of a minimal
rotation hypersurface in $\SS^{n+1}$. Project orthogonally the curve $\gamma$
into the $x_{n+1}x_{n+2}$-plane, and denote it by $\sigma$.
Let $h(\theta)$ is the support function of the curve $\sigma$,
Otsuki \cite{Ots70} proved that $h$ satisfies the differential equation
\begin{equation}\label{eq:ODE of the support function}
      nh(1-h^2)\ddz{h}{\theta}+\left(\ddl{h}{\theta}\right)^2+
      (1-h^2)(nh^2-1)=0
\end{equation}
with the initial conditions $h(0)=a\leq{1}/\sqrt{n}$
and $h'(0)=0$ (see Figure~\ref{Fig: support function}).

If $a=1/\sqrt{n}$, then the minimal rotation hypersurface in $\SS^{n+1}$
generated by the curve $\sigma$ satisfying \eqref{eq:ODE of the support function} with
this initial data is $\Mscr_{n-1,1}$, one of the Clifford minimal hypersurfaces
(see \cite[p.~160]{Ots70}).

From now on we may assume that $0<a<1/\sqrt{n}$, and set
\begin{equation}\label{eq:constant in ODE}
   C(a)=(a^2)^{1/n}(1-a^2)^{1-1/n}=a^{2/n}(1-a^2)^{1-1/n}
\end{equation}
for $0<a<1/\sqrt{n}$. Otsuki \cite{Ots70,Ots72} (see also \cite{LW07})
proved that the support
function $h(\theta)$, which is the solution to the differential equation
\eqref{eq:ODE of the support function} with the initial conditions
$h(0)=a\leq{1}/\sqrt{n}$ and $h'(0)=0$, is a periodic function whose
period is
\begin{equation}\label{eq:period of support function}
   T(a)=2\int_{a_0}^{a_1}\frac{dx}
   {\sqrt{1-x^2-C(a)\left(\dfrac{1}{x^2}-1\right)^{1/n}}}\ ,
\end{equation}
where $a_0=a\in(0,1/\sqrt{n})$, and $a_1\in(1/\sqrt{n},1)$  is a solution
to the equation
\begin{equation*}
   1-x^2-C(a)\left(\frac{1}{x^2}-1\right)^{1/n}=0\ .
\end{equation*}
It was proved in \cite{Ots70,Ots72,LW07} that the period $T$
satisfies the following conditions:
\begin{enumerate}
  \item $T(a)\in(\pi,2\pi)$ is differentiable on $(0,1/\sqrt{n})$,
  \item $\displaystyle\lim_{a\to{}0^{+}}T(a)=\pi$ and
        $\displaystyle\lim_{a\to\left(1/\sqrt{n}\right)^{-}}T(a)=\sqrt{2}\,\pi$.
\end{enumerate}
Moreover the generating curve $\sigma$ is a simple closed curve if and only if
$T(a)=2\pi/k$ for $k=1,2,\ldots$, and $\sigma$ is a closed curve (not necessarily
simple) if and only if $T(a)$ is a (positive) rational multiple of $\pi$.

In conclusion we have the following results:

\begin{theorem}[\cite{Ots70,Ots72,BL90,LW07}]\label{thm:otsuki}
Let $n\geq{}3$ be an integer.
\begin{enumerate}
  \item There is no closed minimally embedded rotation hypersurface of
        $\SS^{n+1}$ other than the Clifford minimal hypersurface
        $\Mscr_{n-1,1}$ and the round geodesic sphere $\SS^{n}$.
  \item There are countably infinitely many closed minimal rotation
        hypersurfaces immersed in $\SS^{n+1}$ {\rm(}see also \cite{Hsi83}{\rm)}.
\end{enumerate}
\end{theorem}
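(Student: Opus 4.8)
The plan is to reduce the theorem to the analytic properties of Otsuki's support function already recorded above, and then to run a short continuity-and-counting argument. Recall the normalization: any closed minimal rotation hypersurface of $\SS^{n+1}$ other than the totally geodesic great sphere $\SS^n$ has a compact generating curve $\gamma\subset\SS_+^2$, whose projection $\sigma$ into the $x_{n+1}x_{n+2}$-plane has a support function $h(\theta)$ solving \eqref{eq:ODE of the support function}; since $h$ is a periodic $C^1$ function it has a minimum, and after rotating we may assume $h(0)=a$, $h'(0)=0$ with $0<a\leq 1/\sqrt{n}$. The value $a=1/\sqrt{n}$ gives exactly the Clifford hypersurface $\Mscr_{n-1,1}$, so the whole problem concerns the range $0<a<1/\sqrt{n}$, for which the stated results of \cite{Ots70,Ots72,LW07} give: $h$ is periodic with period $T(a)$ as in \eqref{eq:period of support function}; $T$ is differentiable on $(0,1/\sqrt{n})$ with $T(a)\in(\pi,2\pi)$, $\lim_{a\to 0^+}T(a)=\pi$ and $\lim_{a\to(1/\sqrt{n})^-}T(a)=\sqrt{2}\,\pi$; and $\sigma$ is a \emph{simple} closed curve iff $T(a)=2\pi/k$ for some positive integer $k$, while $\sigma$ is a closed (possibly self-intersecting) curve iff $T(a)\in\pi\,\mathbb{Q}_{>0}$.

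For item (1): if the rotation hypersurface is embedded then its generating curve is simple, so $T(a)=2\pi/k$ for some $k\in\{1,2,\dots\}$. But $2\pi/k$ never lies in the open interval $(\pi,2\pi)$: $k=1$ gives $2\pi$, $k=2$ gives $\pi$, and $k\geq 3$ gives a value $\leq 2\pi/3<\pi$. This contradicts $T(a)\in(\pi,2\pi)$, so no Otsuki hypersurface with $0<a<1/\sqrt{n}$ is embedded. Together with the two excluded cases --- $a=1/\sqrt{n}$, which is $\Mscr_{n-1,1}$, and the degenerate generating curve, which gives the totally geodesic $\SS^n$ (trivially a minimal embedded rotation hypersurface) --- this proves that $\Mscr_{n-1,1}$ and $\SS^n$ are the only closed minimally embedded rotation hypersurfaces of $\SS^{n+1}$.

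For item (2): the generating curve of the Otsuki hypersurface with parameter $a$ closes up precisely when $T(a)$ is a rational multiple of $\pi$. Since $T$ is continuous on $(0,1/\sqrt{n})$ and its image has points arbitrarily close to both $\pi$ and $\sqrt{2}\,\pi$, the intermediate value theorem shows that $T\bigl((0,1/\sqrt{n})\bigr)$ is a connected subset of $\R$ containing the open interval $(\pi,\sqrt{2}\,\pi)$. That interval contains infinitely many rational multiples of $\pi$ --- namely $r\pi$ for every rational $r\in(1,\sqrt{2})$ --- and for each of them there is a parameter $a$ with $T(a)=r\pi$, hence a closed minimal rotation hypersurface immersed in $\SS^{n+1}$. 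Writing $r=p/q$ in lowest terms, the generating curve winds $p$ times around the axis over $q$ periods of $h$, so distinct values of $r$ give pairwise non-congruent hypersurfaces; this produces countably infinitely many of them. (Conversely the closed ones are indexed, up to congruence, by the countable set of admissible values $T(a)\in\pi\,\mathbb{Q}$, so there are at most countably many --- indeed exactly countably infinitely many.)

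I expect the real difficulty to lie entirely in the quoted input rather than in the argument above: namely the derivation of the first integral $C(a)$ of \eqref{eq:ODE of the support function} in \eqref{eq:constant in ODE}, the resulting quadrature formula \eqref{eq:period of support function} for $T(a)$, and --- hardest of all --- the sharp bounds $T(a)\in(\pi,2\pi)$ together with the two one-sided limits, which require delicate estimates of an elliptic-type integral as $a\to 0^+$ and $a\to(1/\sqrt{n})^-$. These are precisely the contributions of \cite{Ots70,Ots72,LW07} that I am invoking. A minor additional point to handle carefully is the reduction to the normalized initial data $h'(0)=0$ and the verification that $a=1/\sqrt{n}$ corresponds to $\Mscr_{n-1,1}$ and the degenerate curve to $\SS^n$, so that the classification in item (1) is genuinely exhaustive.
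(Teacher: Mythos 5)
Your proposal is correct and follows essentially the same route as the paper, which states this theorem as a direct consequence of the quoted facts about the period function $T(a)$ (its range $(\pi,2\pi)$, its continuity, its one-sided limits $\pi$ and $\sqrt{2}\,\pi$, and the closing criteria $T(a)=2\pi/k$ versus $T(a)\in\pi\,\mathbb{Q}$) without writing out the short deduction you supply. The paper offers no independent proof beyond citing \cite{Ots70,Ots72,BL90,LW07}, and you have correctly identified that all of the genuine difficulty resides in those cited inputs.
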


%================================================================================

\section{Simons' equation and catenoids in space forms}\label{sec:Simons_equation}

Suppose that $\Sigma^n$ is a hypersurface immersed in the
$(n+1)$-dimensional space form $\Mbar^{n+1}(c)$.
Let $A$ be the second fundamental form of $\Sigma^n$ and $\nabla{}A$ be
the covariant derivative of $A$, and let $h_{ij}$ and $h_{ijk}$ be the
components of $A$ and $\nabla{}A$ in an orthonormal frame respectively.

The following lemma was proved by Tam and Zhou
\cite[Lemma 3.1]{TZ09} for the case when $c=0$, but it's also true
for the case when $c\ne{}0$.

\begin{lemma}Let $\Sigma^n$ be a minimal hypersurface
immersed in the space form $\Mbar^{n+1}(c)$.
At a point where $|A|>0$, we have
\begin{equation}\label{eq:Simons identity}
   |A|{}\Delta|A|+|A|^4=\frac{2}{n}\,|\nabla|A|{}|^2
   +nc|A|^2+E\ ,
\end{equation}
with $E\geq{}0$. Moreover, in an orthonormal frame such that
$h_{ij} = \lambda_{i}\delta_{ij}$, then $E = E_1 + E_2 + E_3$, where
\begin{equation}\label{eq:error term of Simons equation}
\begin{aligned}
  E_1 & = \sum_{j\ne{}i,k\ne{}i,k\ne{}j}h_{ijk}^2\ ,\\
  E_2 & = \frac{2}{n} \sum_{j\ne{}i,k\ne{}i,k\ne{}j}
          (h_{kki}-h_{jji})^2\ ,\\
  E_3 & = \left(1+\frac{2}{n}\right)|A|^{-2}\sum_{k}
          \sum_{i\ne{}j}(h_{ii}h_{jjk}-h_{jj}h_{iik})^2\ .
\end{aligned}
\end{equation}
\end{lemma}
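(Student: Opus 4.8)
The plan is to obtain the identity from the classical Simons formula together with a single pointwise algebraic computation, with the parameter $c$ entering only in the first step. Since $\Mbar^{n+1}(c)$ has constant sectional curvature $c$, the Codazzi equation reduces to $h_{ijk}=h_{ikj}$ (the tangential--normal component of the ambient curvature tensor vanishes), so $\nabla A$, with components $h_{ijk}$, is a totally symmetric $3$-tensor; inserting the Gauss and Codazzi equations into the computation of $\Delta h_{ij}$ and commuting covariant derivatives yields the Simons operator identity, and tracing it against $A$ gives, for a minimal hypersurface,
\[
   \tfrac12\Delta|A|^2=|\nabla A|^2+nc|A|^2-|A|^4 .
\]
For $c=0$ this is precisely the formula used by Tam and Zhou; for general $c$ one only replaces the Euclidean curvature term by $nc|A|^2$, everything else being identical.

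Next I would localize to a point $p$ with $|A|(p)>0$, where $|A|$ is smooth. Writing $|A|^2=|A|\cdot|A|$ gives $\tfrac12\Delta|A|^2=|A|\Delta|A|+|\nabla|A||^2$, so the displayed formula becomes
\[
   |A|\Delta|A|+|A|^4=\tfrac2n|\nabla|A||^2+nc|A|^2+E,
   \qquad E:=|\nabla A|^2-\Bigl(1+\tfrac2n\Bigr)|\nabla|A||^2 .
\]
Thus the whole assertion comes down to the pointwise, $c$-free claim that $E=E_1+E_2+E_3$ and $E\geq 0$.

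To prove this, fix at $p$ an orthonormal frame with $h_{ij}=\lambda_i\delta_{ij}$ and use the two consequences of minimality: $\sum_i\lambda_i=0$, and, by differentiating $\sum_i h_{ii}\equiv 0$ on $\Sigma^n$, also $\sum_i h_{iik}=0$ for every $k$. From $\nabla_k|A|^2=2\sum_i\lambda_i h_{iik}$ we get $|\nabla|A||^2=|A|^{-2}\sum_k\bigl(\sum_i\lambda_i h_{iik}\bigr)^2$. On the other side, $|\nabla A|^2=\sum_{i,j,k}h_{ijk}^2$, which I would split according to the coincidence pattern of $i,j,k$: the all-distinct part is exactly $E_1$, the all-equal part is $\sum_i h_{iii}^2$, and, by total symmetry of $h_{ijk}$, the part with exactly two equal indices is $3\sum_{i\ne k}h_{iik}^2$. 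It then remains to expand $E_1+E_2+E_3$ using \eqref{eq:error term of Simons equation}, apply the two trace relations to the cross terms, and check that the result collapses to $|\nabla A|^2-(1+\tfrac2n)|\nabla|A||^2$. Granting the identity, $E\geq 0$ is immediate: $E_1$ and $E_2$ are sums of squares, and $E_3$ is the positive scalar $|A|^{-2}$ times a sum of squares.

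The main obstacle is this final algebraic identity; it amounts to the refined Kato inequality $|\nabla A|^2\geq(1+\tfrac2n)|\nabla|A||^2$ for minimal hypersurfaces together with an explicit expression for its defect, and the bookkeeping is the delicate point — one must regroup the cross terms so that the weight $\tfrac2n$ and the factor $|A|^{-2}$ materialize, and keep careful track of the combinatorial factors arising from summing over ordered versus unordered index tuples. Beyond that, nothing new is needed for $c\ne 0$: the curvature $c$ is confined to the Simons formula of the first step and plays no role in the pointwise estimate, which is exactly why Tam and Zhou's Lemma 3.1 extends verbatim.
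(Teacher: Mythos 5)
Your proposal follows essentially the same route as the paper: derive the traced Simons formula $\tfrac12\Delta|A|^2=|\nabla A|^2+nc|A|^2-|A|^4$, rewrite the left-hand side as $|A|\Delta|A|+|\nabla|A||^2$, and reduce everything to the $c$-independent pointwise identity $|\nabla A|^2=\bigl(1+\tfrac2n\bigr)|\nabla|A||^2+E_1+E_2+E_3$, which the paper likewise does not verify from scratch but imports from the computation of Tam and Zhou. The ingredients you isolate (total symmetry of $h_{ijk}$ from the Codazzi equation in constant curvature, the trace relations $\sum_i\lambda_i=0$ and $\sum_i h_{iik}=0$ from minimality, and the splitting of $|\nabla A|^2$ by index-coincidence pattern) are exactly those used there, so the argument is correct and essentially identical in approach.
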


\begin{proof}For any point $p\in{}\Sigma^n$, we choose an
orthonormal frame field $e_1,\ldots$, $e_{n+1}$ such that,
restricted to $\Sigma^n$, the vectors $e_1,\ldots,e_n$ are
tangent to $\Sigma^n$ and the vector $e_{n+1}$ is perpendicular
to $\Sigma^n$, and the second fundamental form of
$\Sigma^n$ is diagonalized by $h_{ij}=\lambda_{i}\delta_{ij}$,
where $1\leq{}i,j\leq{}n$.

Recall that the curvature tensor $\Rbar_{ABCD}$ of
$\Mbar^{n+1}(c)$ is given by
\begin{equation}\label{eq:space_form_curvature}
   \Rbar_{ABCD}=c(\delta_{AC}\delta_{BD}-\delta_{AD}\delta_{BC})\ ,
   \quad{}1\leq{}A,B,C,D\leq{}n+1\ ,
\end{equation}
where $\delta_{AB}$ is the Kronecker delta.
According to \cite[(3.1)]{CdCK70} and \cite[(1.21) and (1.27)]{SSY75}, we have
\begin{equation}
  \sum_{i,j}h_{ij}\Delta{}h_{ij}=-|A|^{4}+nc{}|A|^2\ ,
\end{equation}
and
\begin{equation}
  |A|\Delta{}|A|+|\nabla{}|A||^2=\frac{1}{2}\Delta|A|^2=
  \sum_{i,j,k}h_{ijk}^2+\sum_{i,j}h_{ij}\Delta{}h_{ij}\ ,
\end{equation}
where $h_{ijk}$ denotes the component of the covariant derivative
of the second fundamental form $A$ of $\Sigma^n$
for $1\leq{}i,j,k\leq{}n$. Therefore we have
\begin{equation}\label{eq:simons01}
  |A|\Delta{}|A|+|\nabla{}|A||^2=|\nabla{}A|^2-|A|^{4}+nc{}|A|^2\ ,
\end{equation}
where $|\nabla{}A|^2=\sum\limits_{i,j,k}h_{ijk}^2$.
We claim that
\begin{equation}\label{eq:simons02}
  |\nabla{}A|^2=\left(1+\frac{2}{n}\right)|\nabla{}|A||^2+E\ .
\end{equation}
In fact, according to the computation in
\cite[{pp. 3456--3457}]{TZ09}, we have
\begin{align*}
  |\nabla{}A|^2-|\nabla{}|A||^2
  & = E_1+2\sum_{i\ne{}k}h_{iik}^2+\frac{n}{n+2}\,E_3\\
  & = E_1+\frac{2}{n}\left(|\nabla{}|A||^2+\frac{n}{n+2}\,E_3+
      \frac{n}{2}\,E_2\right)+\frac{n}{n+2}\,E_3\\
  & = E_1+\frac{2}{n}|\nabla{}|A||^2+E_2+E_3\ ,
\end{align*}
where we use the fact $h_{ijk}=h_{ikj}$ since the sectional curvature of
$\Mbar^{n+1}(c)$ is constant
(see \cite[(2.12)]{CdCK70} or \cite[(1.10)]{SSY75}).

Combining \eqref{eq:simons01} and \eqref{eq:simons02} together, we have
\eqref{eq:Simons identity}.
\end{proof}

Obviously, the term $E$ in \eqref{eq:Simons identity} is always nonnegative,
which implies the famous Simons' inequality
\begin{equation}\label{eq:Simons_inequality}
   |A|{}\Delta|A|+|A|^4\geq\frac{2}{n}\,|\nabla|A|{}|^{2}+nc|A|^2\ .
\end{equation}
If $E\equiv{}0$ in \eqref{eq:Simons identity}, we get the
\emph{Simons' equation}
\begin{equation}\label{eq:Simons equation}
   |A|{}\Delta|A|+|A|^4=\frac{2}{n}\,|\nabla|A|{}|^2+nc|A|^2\ .
\end{equation}
If $n=2$, then $E\equiv{}0$ in \eqref{eq:Simons identity},
so we have the following corollary.

\begin{corollary}If $\Sigma^2$ is a minimal surface
immersed in the $3$-dimensional space form $\Mbar^{3}(c)$,
then $\Sigma^2$ satisfies the following Simons' equation
\begin{equation}\label{eq:Simons equation (n=2)}
   |A|{}\Delta|A|+|A|^4=|\nabla|A|{}|^2+2c|A|^2\ .
\end{equation}
\end{corollary}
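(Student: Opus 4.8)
The plan is to apply the preceding Lemma (the Simons' identity \eqref{eq:Simons identity}) with $n=2$ and verify that the error term $E$ vanishes identically on the set where $|A|>0$. Fix a point $p\in\Sigma^2$ with $|A|(p)>0$ and choose, as in the proof of the Lemma, an adapted orthonormal frame $e_1,e_2,e_3$ with $e_1,e_2$ tangent, $e_3$ normal, and $h_{ij}=\lambda_i\delta_{ij}$. Then $E=E_1+E_2+E_3$ with the three pieces as in \eqref{eq:error term of Simons equation}; I would treat each separately.

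First, $E_1$ and $E_2$ are sums over triples of \emph{pairwise distinct} indices $i,j,k\in\{1,2\}$. Since there are only two admissible index values when $n=2$, no such triple exists, so the index set is empty and $E_1=E_2=0$. The only term requiring a computation is $E_3=(1+\tfrac{2}{2})|A|^{-2}\sum_{k}\sum_{i\ne j}(h_{ii}h_{jjk}-h_{jj}h_{iik})^2$. Here I would invoke minimality: $\lambda_1+\lambda_2=0$, i.e. $h_{22}=-h_{11}$, and, differentiating this trace relation, $h_{22k}=-h_{11k}$ for each $k$. Substituting into the $(i,j)=(1,2)$ summand gives
\begin{equation*}
   h_{11}h_{22k}-h_{22}h_{11k}=h_{11}(-h_{11k})-(-h_{11})h_{11k}=0\ ,
\end{equation*}
and the $(i,j)=(2,1)$ summand is the negative of this, hence also zero; so $E_3=0$. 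Therefore $E\equiv0$ wherever $|A|>0$.

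It then remains only to substitute $n=2$ into the Simons' identity \eqref{eq:Simons identity}: with $\tfrac{2}{n}=1$ and $nc=2c$, and with $E\equiv0$, the identity reduces exactly to \eqref{eq:Simons equation (n=2)} at every point where $|A|>0$; and at points where $|A|=0$ both sides of \eqref{eq:Simons equation (n=2)} vanish trivially, so the equation holds on all of $\Sigma^2$. I do not anticipate any genuine obstacle here: the statement is a pure specialization of the Lemma, and the only nonvacuous verification is the one-line cancellation showing $E_3=0$, which rests on the minimality relation $h_{11}+h_{22}=0$ together with its covariant derivative $h_{11k}+h_{22k}=0$.
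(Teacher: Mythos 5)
Your proposal is correct and follows exactly the route the paper takes: the paper's entire justification is the one-line remark that $E\equiv 0$ when $n=2$, and your argument simply supplies the verification (the index sets of $E_1$ and $E_2$ are empty for $n=2$, and $E_3$ vanishes via $h_{11}+h_{22}=0$ and $h_{11k}+h_{22k}=0$). The only caveat, which the paper shares, is that the identity is only meaningful at points where $|A|>0$, since $\Delta|A|$ need not be defined where $|A|$ vanishes.
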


We may ask what kinds of non totally geodesic minimal hypersurfaces in the
space form $\Mbar^{n+1}(c)$ satisfy the Simons' equation
\eqref{eq:Simons equation}?
Proposition \ref{prop:Clifford_Simons_equation} and
Proposition \ref{prop:catenoid_Simons identity} show that
the Clifford minimal hypersurfaces \eqref{eq:Clifford hypersurfaces}
and the minimal rotation hypersurfaces (i.e., the catenoids)
satisfy \eqref{eq:Simons equation}. % in the space form $\Mbar^{n+1}(c)$.

On the other hand, Theorem \ref{thm:catenoid charaterization} shows that
the Clifford minimal hypersurfaces and catenoids are the only non totally
geodesic complete minimal hypersurfaces satisfying
\eqref{eq:Simons equation}. % in $\Mbar^{n+1}(c)$.

\begin{proposition}\label{prop:Clifford_Simons_equation}
When $c>0$, the second fundamental form $|A|$ of each
Clifford minimal hypersurface \eqref{eq:Clifford hypersurfaces}
in $\SS^{n+1}(c)$ satisfies \eqref{eq:Simons equation}.
\end{proposition}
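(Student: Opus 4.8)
The plan is to verify \eqref{eq:Simons equation} directly on a Clifford minimal hypersurface by computing each quantity appearing in it, using the fact recorded in Theorem \ref{Thm:Clifford_minimal_hypersurface} that the second fundamental form is parallel with constant eigenvalues. First I would pass from $\SS^{n+1}(c)$ to $\SS^{n+1}$: by Lemma \ref{lem:conformal_minimal} the hypersurface $\Mscr_{m,n-m}(c)$ corresponds under the scaling $f(x)=x/\sqrt c$ to $\Mscr_{m,n-m}\subset\SS^{n+1}$, and under this homothety $|A|^2$ scales by $c$ and the Laplacian by $c$ as well, so it suffices to identify the corresponding identity on the unit sphere and then track the scaling. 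Alternatively, and perhaps more cleanly, one can just compute everything intrinsically on $\Mscr_{m,n-m}(c)=S^m(\sqrt{m/(cn)})\times S^{n-m}(\sqrt{(n-m)/(cn)})$.

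The key computation is this: for a Clifford minimal hypersurface the principal curvatures are the two constants $\lambda=\sqrt{(n-m)/m}\cdot\sqrt c$ (with multiplicity $m$) and $\mu=-\sqrt{m/(n-m)}\cdot\sqrt c$ (with multiplicity $n-m$), chosen so that $m\lambda+(n-m)\mu=0$ (minimality) and $\lambda\mu=-c$ (so that the Gauss equation gives the right ambient curvature); a short check gives $|A|^2=m\lambda^2+(n-m)\mu^2=nc$. Since $A$ is parallel, all the covariant derivatives $h_{ijk}$ vanish, hence $|A|$ is constant, so $\nabla|A|\equiv 0$ and $\Delta|A|\equiv 0$. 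Plugging into \eqref{eq:Simons equation}: the left-hand side is $0+|A|^4=(nc)^2=n^2c^2$, and the right-hand side is $0+nc|A|^2=nc\cdot nc=n^2c^2$. So the two sides agree and \eqref{eq:Simons equation} holds. (Equivalently, since $h_{ijk}\equiv 0$ one has $E_1=E_2=E_3=0$, so $E\equiv 0$ in \eqref{eq:Simons identity}, which is exactly \eqref{eq:Simons equation}.)

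The steps, in order, are: (1) record the principal curvatures and multiplicities of $\Mscr_{m,n-m}(c)$, either by direct computation for a product of round spheres sitting in a sphere or by citing Theorem \ref{Thm:Clifford_minimal_hypersurface} together with Lemma \ref{lem:conformal_minimal}; (2) conclude $|A|^2\equiv nc$; (3) observe that the second fundamental form of a Riemannian product of totally umbilic factors is parallel, hence $h_{ijk}\equiv 0$, hence $|A|$ is a nonzero constant and $\nabla|A|=\Delta|A|=0$; (4) substitute into \eqref{eq:Simons equation} and check $|A|^4=nc|A|^2$, which holds precisely because $|A|^2=nc$. I do not expect any real obstacle here; the only mild care needed is bookkeeping the factors of $c$ (equivalently, confirming $\lambda\mu=-c$ and $|A|^2=nc$ rather than the unit-sphere values $\lambda\mu=-1$, $|A|^2=n$), and being explicit that a parallel second fundamental form forces every $h_{ijk}$ to vanish so that $E\equiv 0$ in the Simons identity \eqref{eq:Simons identity}.
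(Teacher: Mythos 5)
Your proposal is correct and follows essentially the same route as the paper: reduce to (or rescale from) the unit sphere, record the two constant principal curvatures with multiplicities $m$ and $n-m$, deduce $|A|^2=nc$ and $h_{ijk}\equiv 0$ so that $E\equiv 0$ in \eqref{eq:Simons identity}, which is exactly \eqref{eq:Simons equation}. Your step (3), making explicit that parallelism of $A$ is what forces every $h_{ijk}$ to vanish, is a slightly more careful phrasing of the paper's ``each component of $h_{ij}$ is a constant'' but is the same argument.
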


\begin{proof}
We can prove the statement by direct computation (see
\cite[pp.68-70]{CdCK70} or \cite[pp.229--230]{Ji04} for the details).
Because of Lemma \ref{lem:conformal_minimal}, we will just prove the statement
for the case when $c=1$.

%We may embed $\Mscr_{m,n-m}$ into
%$\SS^{n+1}$ as in $\S$\ref{subsec:Clifford_torus}, where $m=1,\ldots,n-1$.
For $m=1,\ldots,n-1$, we may embed the Clifford minimal hypersurface
$\Mscr_{m,n-m}$ into $\SS^{n+1}$ as follows.
Let $(u, v)$ be a point of $\Mscr_{m,n-m}$ where $u$ is a vector in $\R^{m+1}$
of length $\sqrt{m/n}$, and $v$ is a vector in $\R^{n-m+1}$ of length
$\sqrt{(n-m)/n}$. We can consider $(u, v)$ as a vector in
$\R^{n+2} =\R^{m+1}\times\R^{n-m+1}$ of length $1$.
Then we may choose an orthonormal basis on $\Mscr_{m,n-m}$ such that
the second fundamental form of $\Mscr_{m,n-m}$ can be written as follows
\begin{equation*}
   h_{ij}=\diag\bigg(
   \underbrace{\sqrt{\frac{n-m}{n}},\ldots,\sqrt{\frac{n-m}{n}}}_{m},
   \underbrace{-\sqrt{\frac{m}{n-m}},\ldots,-\sqrt{\frac{m}{n-m}}}_{n-m}
   \bigg)\ .
\end{equation*}
Since each component of $h_{ij}$ is a constant, we have $E=0$ in
\eqref{eq:Simons identity}. On the other hand, it's easy to get
\begin{equation*}
   |A|^{2}=m\cdot\frac{n-m}{m}+(n-m)\cdot\frac{m}{n-m}=n\ ,
\end{equation*}
so the Clifford minimal hypersurfaces satisfy \eqref{eq:Simons equation}.
\end{proof}

Next we shall verify that any catenoid in the space form
$\Mbar^{n+1}(c)$ satisfies the Simons' equation
\eqref{eq:Simons equation}.
In the case when $c=0$, Proposition \ref{prop:catenoid_Simons identity}
was proved by Tam and Zhou \cite[Proposition 2.1 (iv)]{TZ09}.

\begin{proposition}\label{prop:catenoid_Simons identity}
The second fundamental form $|A|$ of each catenoid $\CC$ in the space
form $\Mbar^{n+1}(c)$
satisfies the Simons' equation \eqref{eq:Simons equation}.
\end{proposition}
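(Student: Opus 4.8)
The plan is to exploit the fact that a catenoid is a rotation hypersurface, so its second fundamental form has only two distinct principal curvatures, one of multiplicity $n-1$ and one of multiplicity $1$, and these depend only on the arc-length parameter $s$ of the generating curve $\gamma$. First I would set up an adapted orthonormal frame $e_1,\dots,e_n$ along $\CC$ in which $e_1$ is tangent to the generating curve (the "meridian" direction) and $e_2,\dots,e_n$ are tangent to the orbit sphere, so that $h_{ij}=\diag(\mu,\lambda,\dots,\lambda)$ with $\mu+(n-1)\lambda=0$ by minimality, hence $\mu=-(n-1)\lambda$ and $|A|^2=(n-1)n\lambda^2$. The key structural point is that $|A|$, and indeed every principal curvature, is a function of $s$ alone; this is what will force the error term $E$ in \eqref{eq:Simons identity} to vanish.

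The main step is to compute the components $h_{ijk}$ of $\nabla A$ in this frame and show that $E_1=E_2=E_3=0$. Since the hypersurface is $\SO(n)$-invariant and the frame is chosen compatibly with the orbit decomposition, the Codazzi equations $h_{ijk}=h_{ikj}$ together with the rotational symmetry imply that the only possibly nonzero components are $h_{111}$, $h_{1jj}=h_{j1j}=h_{jj1}$ for $j\geq 2$, coming from derivatives of $\mu$ and $\lambda$ in the meridian direction, plus terms of the form $h_{j k 1}$ with $j,k\geq 2$ arising from the connection of the orbit sphere; one checks using the Codazzi equation in a space form ($h_{ijk}$ totally symmetric) that the latter reduce to multiples of $\lambda$ times Christoffel symbols which, after symmetrization, contribute nothing off the "diagonal" pattern. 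Concretely: $E_1$ involves $h_{ijk}$ with three distinct indices, and by symmetry of the two-eigenvalue structure any such component vanishes; $E_2$ involves $(h_{kki}-h_{jji})^2$ with $i,j,k$ distinct, all of which lie among the repeated eigenvalue $\lambda$ so $h_{kki}=h_{jji}$; and $E_3$ involves $h_{ii}h_{jjk}-h_{jj}h_{iik}$ summed over $i\neq j$, where again using $h_{ii}\in\{\mu,\lambda\}$ and the proportionality $h_{jjk}\propto\lambda_{,k}$ one gets exact cancellation. Hence $E\equiv 0$, which is exactly the Simons' equation \eqref{eq:Simons equation}.

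An alternative, perhaps cleaner, route would bypass the frame computation: by Lemma \ref{lem:conformal_minimal} it suffices to treat $c\in\{-1,0,1\}$, and for $c\leq 0$ one has the explicit generating curve \eqref{eq:generating_curve_function} (for $c>0$ the Otsuki description of $\S$\ref{subsec:Otsuki's curves}), so one could compute $|A|$ as an explicit function of $y$ (or of the Otsuki support function $h$), compute $\Delta_{\CC}|A|$ using the warped-product metric \eqref{eq:warped product metric}, and verify \eqref{eq:Simons equation} directly as an ODE identity in one variable. This reduces everything to a one-dimensional calculation along $\gamma$. I would most likely present the structural argument (two principal curvatures $\Rightarrow E=0$) as the main proof and mention the ODE check as a verification, since the structural argument also makes transparent why \emph{only} catenoids and Clifford hypersurfaces — the two classes with exactly two principal curvatures of the right multiplicities — can satisfy the equation.

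The hard part will be establishing rigorously that, in the rotationally adapted frame, all the "mixed" derivative components $h_{ijk}$ that would spoil the $E_i$'s actually vanish; this requires care with the induced connection on the orbit spheres and a correct use of the Codazzi equations for an isoparametric-type situation, rather than being a genuine conceptual obstacle. Once the symmetry of $h_{ijk}$ and the two-eigenvalue structure are in hand, the vanishing of $E_1,E_2,E_3$ is essentially bookkeeping, and the conclusion follows immediately from \eqref{eq:Simons identity}.
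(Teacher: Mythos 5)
Your proposal is correct, but your primary route is genuinely different from the one the paper takes. The paper does \emph{not} show $E_1=E_2=E_3=0$; it follows the do Carmo--Dajczer parametrization to write the principal curvatures explicitly as in \eqref{eq:principal_curvatures}, uses the first integrals \eqref{eq:dot_x} and \eqref{eq:ddot_x} of the minimal rotation ODE to get $|A|^2=a^2n(n-1)x_1^{-2n}$ as in \eqref{eq:square_norm_of_A}, and then verifies the Simons' equation \eqref{eq:Simons equation} directly as a one-variable identity using the formulas \eqref{eq:Laplacian} and \eqref{eq:covariant_derivative} for $\Delta$ and $|\nabla\cdot|$ applied to functions of $s$ alone --- i.e.\ exactly your ``alternative, cleaner route.'' Your main argument (adapted frame, two principal curvatures $\mu=-(n-1)\lambda$ with multiplicities $1$ and $n-1$ depending only on $s$, hence $E=0$ in \eqref{eq:Simons identity}) does work, and it is arguably more illuminating because it mirrors the converse direction in the proof of Theorem \ref{thm:catenoid charaterization}; the paper's computation, by contrast, is self-contained and checks the constants without any frame bookkeeping. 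Two points in your sketch deserve emphasis if you write it out. First, in the adapted frame the only nonvanishing components of $\nabla A$ are $h_{111}=\mu'$ and $h_{aa1}=h_{a1a}=h_{1aa}=\lambda'$ for orbit indices $a$: the vanishing of $h_{11a}$ follows from $e_a(\mu)=0$ (rotational invariance) together with $h(\nabla_{e_a}e_1,e_1)=\mu\langle\nabla_{e_a}e_1,e_1\rangle=0$ (metric compatibility), and the vanishing of $h_{abc}$ for orbit indices from the antisymmetry of $\langle\nabla_{e_c}e_a,e_b\rangle$; this is the ``care with the induced connection'' you anticipated, and it is routine. Second, and more importantly, $E_3=0$ is \emph{not} a consequence of the two-eigenvalue structure alone: the term with $i=1$, $j=a$, $k=1$ is $(\mu\lambda'-\lambda\mu')^2$, which vanishes precisely because minimality forces $\mu/\lambda=-(n-1)$ to be constant. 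Your phrase ``exact cancellation'' should be replaced by this explicit use of $\mu=-(n-1)\lambda$; without minimality the structural argument fails at exactly this point.
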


\begin{proof}We shall prove the statement in the unified way by
using the argument in \cite[$\S$ 2 and $\S$3]{dCD83}.
Consider the space
form $\Mbar^{n+1}(c)$ as a subset of $\R^{n+2}$ as follows:

(i) If $c>0$, let
\begin{equation*}
   \SS^{n+1}(c)=\{x\in\R^{n+2}\ |\ g_{1}(x,x)=1/c\}=\Mbar^{n+1}(c)\ ,
\end{equation*}
where $g_{1}(x,y)=x_{1}y_{1}+\cdots+x_{n+1}y_{n+1}+x_{n+2}y_{n+2}$
for $x,y\in\R^{n+2}$.

(ii) If $c<0$, let
\begin{equation*}
   \H^{n+1}(c)=\{x\in\R^{n+2}\ |\ g_{-1}(x,x)=1/c, x_{n+2}>0\}
   =\Mbar^{n+1}(c)\ ,
\end{equation*}
where $g_{-1}(x,y)=x_{1}y_{1}+\cdots+x_{n+1}y_{n+1}-x_{n+2}y_{n+2}$
for $x,y\in\R^{n+2}$.

(iii) If $c=0$, let
\begin{equation*}
   \R^{n+1}=\{x\in\R^{n+2}\ |\ x_{n+2}=0\}=\Mbar^{n+1}(0)\ .
\end{equation*}

Let $e_{i}=(0,\cdots,0,\underset{i^{\rm{th}}}{1},0,\cdots,0)$ be
the $i$-th vector in the space $\R^{n+2}$ for $i=1,\ldots,n+2$.
Let $P^{2}$ be a subspace of $\R^{n+2}$ spanned by either $e_{n+1}$ and $e_{n+2}$
if $c\ne{}0$ or $e_{n+1}$ if $c=0$, and
let $\O(P^2)$ be the set of metric-preserving transformations of $(\R^{n+2},g_{1})$
if $c>0$, $(\R^{n+2},g_{-1})$ if $c<0$ or $\R^{n+1}$ if $c=0$,
which leave $P^2$ pointwise fixed.
Let $P^{3}$ be a subspace of $\R^{n+2}$ spanned by either $e_{1}$,
$e_{n+1}$ and $e_{n+2}$ if $c\ne{}0$ or $e_{1}$ and $e_{n+1}$ if $c=0$.

Let $M^2(c)=\Mbar^{n+1}(c)\cap{}P^{3}$, and
let $\gamma$ be a smooth curve in $M^2(c)$ that does not meet
$P^2$. The orbit of $\gamma$ under the action of $\O(P^2)$ is a
rotation hypersurface generated by $\gamma$,
and the curve $\gamma$ is the generating curve of the
rotation hypersurface.

Suppose that the generating curve $\gamma$ is parametrized by either
$x_{1}=x_{1}(s)$, $x_{n+1}=x_{n+1}(s)$ and $x_{n+2}=x_{n+2}(s)$ if $c\ne{}0$ or
$x_{1}=x_{1}(s)$ and $x_{n+1}=x_{n+1}(s)$ if $c=0$, where $s$
is the arc length parameter of the curve $\gamma$.
Let $\I$ be either a straight line if $c\leq{}0$ or
a closed curve immersed in a plane if $c>0$.
Let $f:\SS^{n-1}\times\I \to \Mbar^{n+1}(c)\subset\R^{n+2}$ be the minimal
spherical rotation hypersurface generated by $\gamma$, which is parametrized
as follows
\begin{equation}\label{eq:rotation_hypersurface_1}
       f(t_1,\ldots,t_{n-1},s)=
       (x_{1}(s)\varphi_{1},\ldots,x_{1}(s)\varphi_{n},x_{n+1}(s),x_{n+2}(s))
\end{equation}
if $c\ne{}0$, or
\begin{equation}\label{eq:rotation_hypersurface_2}
       f(t_1,\ldots,t_{n-1},s)=
       (x_{1}(s)\varphi_{1},\ldots,x_{1}(s)\varphi_{n},x_{n+1}(s))
\end{equation}
if $c=0$, where $\varphi(t_1,\ldots,t_{n-1})=(\varphi_{1},\ldots,\varphi_{n})$
is the orthogonal parametrization of the $(n-1)$-unit sphere of the subspace of
$\R^{n+2}$ spanned by $e_1,\ldots,e_{n}$.

Let $\CC=f(\SS^{n-1}\times\I)$ be the minimal rotation
hypersurface immersed in $\Mbar^{n+1}(c)\subset\R^{n+2}$.
According to the computation
in \cite[$\S$3]{dCD83}, we have the first fundamental form of $\CC$:
\begin{equation}\label{eq:first_fundamental_form}
   g_{ij}=\begin{cases}
             \alpha_{ij}x_{1}^{2}(s), & 1\leq{}i,j\leq{}n-1\ ,\\
             0, & i=n,j\ne{}n\ \text{or}\ i\ne{}n, j=n\ ,\\
             1, & i=j=n\ ,
          \end{cases}
\end{equation}
where $\alpha_{ij}=\sum\limits_{k=1}^{n}\dfrac{\partial\varphi_{k}}{\partial{}t_{i}}
\dfrac{\partial\varphi_{k}}{\partial{}t_{j}}$ for $1\leq{}i,j\leq{}n-1$.
According to Proposition 3.2 in \cite{dCD83}, the principal
curvatures of $\CC$ are
\begin{equation}\label{eq:principal_curvatures}
   \lambda_{1}=\cdots=\lambda_{n-1}=
   -\frac{\sqrt{1-cx_{1}^{2}-\dot{x}_{1}^{2}}}{x_{1}}
   \quad\text{and}\quad
   \mu=\frac{\ddot{x}_{1}+cx_{1}}{\sqrt{1-cx_{1}^{2}-\dot{x}_{1}^{2}}}\ ,
\end{equation}
where $\dot{x}_{1}$ and $\ddot{x}_{1}$ are first and second derives of $x_{1}$
on $s$ respectively.
Since $\CC$ is a minimal rotation hypersurface, i.e., a catenoid,
in $\Mbar^{n+1}(c)$, then by \cite[(3.13) and (3.16)]{dCD83} we have
\begin{equation}\label{eq:dot_x}
   \dot{x}_{1}^{2} = 1-cx_{1}^{2}-a^2{}x_{1}^{2-2n}\ ,
\end{equation}
and
\begin{equation}\label{eq:ddot_x}
  \ddot{x}_{1} = -cx_{1}+a^{2}(n-1)x_{1}^{1-2n}\ ,
\end{equation}
where $a>0$ is a constant.
Therefore we have the following identities
\begin{equation}\label{eq:square_norm_of_A}
  |A|^2 = (n-1)\cdot\frac{1-cx_{1}^{2}-\dot{x}_{1}^{2}}{x_{1}^{2}}+
            \frac{(\ddot{x}_{1}+cx_{1})^2}{1-cx_{1}^{2}-\dot{x}_{1}^{2}}
        =a^{2}n(n-1)x_{1}^{-2n}\ ,
\end{equation}
where we use the equations \eqref{eq:dot_x} and \eqref{eq:ddot_x} for the
last equality.

If $\phi=\phi(s)$ is a function on $\CC\subset\Mbar^{n+1}(c)$
depending only on the variable $s$,
then the Laplacian and the square norm of the covariant derivative of $\phi$
with respect to the metric \eqref{eq:first_fundamental_form} on $\CC$
respectively are
\begin{equation}\label{eq:Laplacian}
   \Delta{}\phi=\ddot{\phi}+(n-1)\,\frac{\dot{x}_{1}}{x_{1}}\,\dot{\phi}
\end{equation}
and
\begin{equation}\label{eq:covariant_derivative}
   |\nabla{}\phi|^2=\dot{\phi}^{2}\ .
\end{equation}
Since $|A|=a\sqrt{n(n-1)}\,x_{1}^{-n}$ is function on $\CC$
that depends only on $s$, we have the following equalities
\begin{align*}
   |A|{}\Delta|A|+|A|^4
     & = a^{2}n^{2}(n-1)x_{1}^{-4n}(-2a^{2}+2x_{1}^{2n-2}-cx_{1}^{2n})\\
     & = \frac{2}{n}\,|\nabla|A|{}|^2+nc|A|^2 \ ,
\end{align*}
where we use the equations \eqref{eq:dot_x} and \eqref{eq:ddot_x} again.
\end{proof}

\section{Proof of Theorem \ref{thm:catenoid charaterization}}
\label{sec:Proof_Main_Theorem}

\begin{maintheorem}
Let $\Mbar^{n+1}(c)$ be the space form of dimension $n+1$, where
$n\geq{}3$. Suppose that $\Sigma^n$ is a
non totally geodesic complete minimal hypersurface immersed in $\Mbar^{n+1}(c)$.
If the Simons' equation \eqref{eq:Simons equation} holds as an equation at
all nonvanishing points of $|A|$ in $\Sigma^n$,
then $\Sigma^n\subset\Mbar^{n+1}(c)$ is either
\begin{enumerate}
  \item a catenoid if $c\leq{}0$, or
  \item a Clifford minimal hypersurface or a compact Ostuki
        minimal hypersurface if $c>0$.
\end{enumerate}
\end{maintheorem}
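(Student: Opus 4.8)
The plan is to reduce the rigidity statement to the structure of the zero set of $E$, exploit the vanishing of $E$ pointwise on $\{|A|>0\}$ to control the rank of the second fundamental form, and then invoke known classification results for hypersurfaces with two principal curvatures. First I would analyze the consequences of $E\equiv 0$ at a point where $|A|>0$. Working in the orthonormal frame that diagonalizes $h_{ij}=\lambda_i\delta_{ij}$, the vanishing of $E_1$, $E_2$, $E_3$ in \eqref{eq:error term of Simons equation} forces strong constraints: $E_1=0$ kills all components $h_{ijk}$ with three distinct indices, $E_2=0$ forces $h_{kki}=h_{jji}$ whenever $i,j,k$ are distinct, and $E_3=0$ forces $\lambda_i h_{jjk}=\lambda_j h_{iik}$ for all $k$ and all $i\ne j$. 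Combining these with the Codazzi equations (valid since $\Mbar^{n+1}(c)$ has constant curvature, so $h_{ijk}$ is fully symmetric) and minimality $\sum_i\lambda_i=0$, the expectation is that at each point of $\{|A|>0\}$ the matrix $(\lambda_i)$ has at most two distinct eigenvalues; more precisely, one shows either all $\nabla A$-components vanish (so $|A|$ is locally constant and one is in the Clifford case when $c>0$) or the principal curvatures split into a distinguished simple one $\mu$ and an $(n-1)$-fold one $\lambda$ with $\lambda=-\mu/(n-1)$ — exactly the principal-curvature profile \eqref{eq:principal_curvatures} of a rotation hypersurface.

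Next I would establish that the splitting into two eigenvalues is global and that the $(n-1)$-dimensional eigendistribution is integrable with totally geodesic (in $\Sigma^n$) leaves, so that $\Sigma^n$ is a rotation hypersurface in the sense of $\S$\ref{subsec:rotation_hypersurface}. This is where the standard machinery for hypersurfaces with two distinct principal curvatures enters: the multiplicity-$(n-1)$ distribution is automatically integrable with umbilic leaves, and one checks, using the derived relations on $h_{ijk}$, that these leaves are actually totally umbilic round spheres in $\Mbar^{n+1}(c)$, i.e., the orbits of the spherical group $G=\SO(n)$. On the open dense set where $|A|>0$ this identifies $\Sigma^n$ with (an open piece of) a catenoid; one then uses completeness together with the fact that $E$ vanishes identically on $\{|A|>0\}$ and the real-analyticity of minimal hypersurfaces to propagate the conclusion across the zero set of $|A|$ (the zero set cannot be all of $\Sigma^n$ since $\Sigma^n$ is not totally geodesic, and a non-totally-geodesic complete minimal hypersurface that is rotational on a dense open set is globally rotational).

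Finally I would match the resulting rotation hypersurface to the ODE descriptions from $\S$\ref{sec:prelim}. For $c\le 0$, a complete minimal rotation hypersurface is precisely a solution of \eqref{eq:diff equation of gamma}, i.e., a catenoid, which gives case (1). For $c>0$, completeness forces $\Sigma^n$ to be closed, so the generating curve must be a closed curve; by Otsuki's analysis (Theorem \ref{thm:otsuki}) the generating curve is either the one giving the Clifford hypersurface $\Mscr_{n-1,1}$ (when the initial value $a=1/\sqrt n$) or one of the countably many closed immersed curves with $T(a)$ a rational multiple of $\pi$, yielding a compact Otsuki minimal hypersurface — this is case (2), where one must also separately dispose of the possibility that $|A|$ is constant and nonzero, which by Theorem \ref{Thm:Clifford_minimal_hypersurface} forces $|A|^2=n$ and hence a general Clifford hypersurface $\Mscr_{m,n-m}$. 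The main obstacle I anticipate is the middle step: squeezing out of $E_1=E_2=E_3=0$ and Codazzi the clean dichotomy "$\nabla A\equiv 0$ on a component, or exactly two principal curvatures with the catenoidal ratio," and then upgrading this pointwise algebraic fact to a global geometric foliation statement across the zero locus of $|A|$; handling the immersed (as opposed to embedded) case when $c>0$ requires care, since the generating curve need not be simple.
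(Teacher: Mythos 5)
Your plan follows essentially the same route as the paper: extract the eigenvalue structure ($\lambda$ with multiplicity $n-1$ and $\mu=-(n-1)\lambda$, or else $|A|$ locally constant forcing the Clifford case via $|A|^2=nc$) from $E_1=E_2=E_3=0$, identify the resulting piece as a rotation hypersurface via the two-principal-curvature classification (the paper cites Otsuki's Theorem 5 and do Carmo--Dajczer Corollary 4.4 for exactly the step you propose to carry out by hand), propagate by analyticity/unique continuation, and finish with completeness (covering-map argument for $c\leq 0$, Otsuki's closed-curve analysis for $c>0$). The proposal is correct in outline and correctly flags the genuine work, so no substantive divergence from the paper's argument.
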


\begin{proof}%[{\bf Proof of Theorem \ref{thm:catenoid charaterization}}]
By the assumption $\Sigma^{n}$ is not totally geodesic, so $|A|$ is a nonnegative
continuous function which does not vanish identically.
Let $p$ be a point such that $|A|(p) > 0$, there exists an open neighborhood
$U$ of $p$ such that $|A| > 0$ in $U$.

We claim that $|\nabla|A||\not\equiv{}0$ in $U$
unless $\Sigma^{n}$ is a Clifford minimal hypersurface in the case when $c>0$.
We need deal with two cases:

\underline{\emph{Case one}}: $c\leq{}0$. Assume $|\nabla|A||\equiv{}0$ in $U$.
Since $\Sigma^{n}$ satisfies the Simons' equation \eqref{eq:Simons equation},
and $|A|$ is a positive constant in $U$,
we have $0<|A|^2=nc\leq{}0$ in the open set $U$, which is a contradiction.

\underline{\emph{Case two}}: $c>0$. In this case, if $|\nabla|A||\equiv{}0$ in $U$,
then $|A|$ is a nonzero constant in $U$, so $|A|^2=nc$ in $U$ according to
\eqref{eq:Simons equation} and the assumption $|A|\ne{}0$ in $U$,
then $\Sigma^{n}$ is a Clifford minimal hypersurface according to
\cite{CdCK70,Law69}.
In this case, we have $|\nabla|A||\not\equiv{}0$ in $U$ if $\Sigma^{n}$
is not a Clifford minimal hypersurface.

Therefore in each case, there is a point in $U$ such that
$|\nabla|A||\ne{}0$ if $\Sigma^{n}$ is not a Clifford minimal hypersurface in the case
when $c>0$.

By shrinking $U$, we may assume that $|A| > 0$ and $|\nabla|A|| > 0$ in $U$
if $\Sigma^{n}$ is not a Clifford minimal hypersurface in the case
when $c>0$. By \eqref{eq:Simons identity} and the fact that
$\Sigma^{n}$ satisfies \eqref{eq:Simons equation} in $U$, we conclude
that $E \equiv{} 0$ in $U$.
According to the argument in \cite[pp. 3457--3458]{TZ09}, the eigenvalues
of $A=(h_{ij})_{n\times{}n}$ are $\lambda$ with multiplicity $n-1$ and
$\mu=-(n-1)\lambda$ with $\lambda>0$ since $|A|>0$. According to
\cite[Theorem 5]{Ots70} and \cite[Corollary 4.4]{dCD83},
$U$ is part of a catenoid $\CC$ in $\Mbar^{n+1}(c)$.

According to the maximal principle of minimal submanifolds,
$\Sigma^n$ is part of a catenoid $\CC$ in
$\Mbar^{n+1}(c)$. We claim that $\Sigma^{n}$ must coincide with the %be the same as the
catenoid $\CC$. Let $f:\Sigma^{n}\to\CC$ be the inclusion. There are two cases:

\underline{\emph{Case one}}: $c\leq{}0$.
In this case, each catenoid is a simply connected (since $n\geq{}3$)
complete minimal rotation hypersurface embedded in $\Mbar^{n+1}(c)$.
Since $f$ is a local isometry, the inclusion $f$ is a covering map by
Lemma 8.14 in \cite[p.224]{Spi99}.
Therefore $f$ must be an identity map, i.e., $\Sigma^{n}=\CC$.

\underline{\emph{Case two}}: $c>0$.
In this case, since $\Sigma^n$ is a closed minimal hypersurface
immersed in $\SS^{n+1}(c)$,
we then have $\Sigma^{n}=\CC$ according to Theorem 4 and Theorem 5
in \cite{Ots70}.
\end{proof}

\section{Appendix}\label{sec:appendix}

In the appendix, with the help of Mathematica and Ti\emph{k}Z/PGF,
we shall draw some figures of the generating curves
of three dimensional catenoids in the space form $\Mbar^{4}(c)$.

%===================================================================
\begin{figure}[htbp]
  \begin{center}
     \includegraphics[scale=0.7]{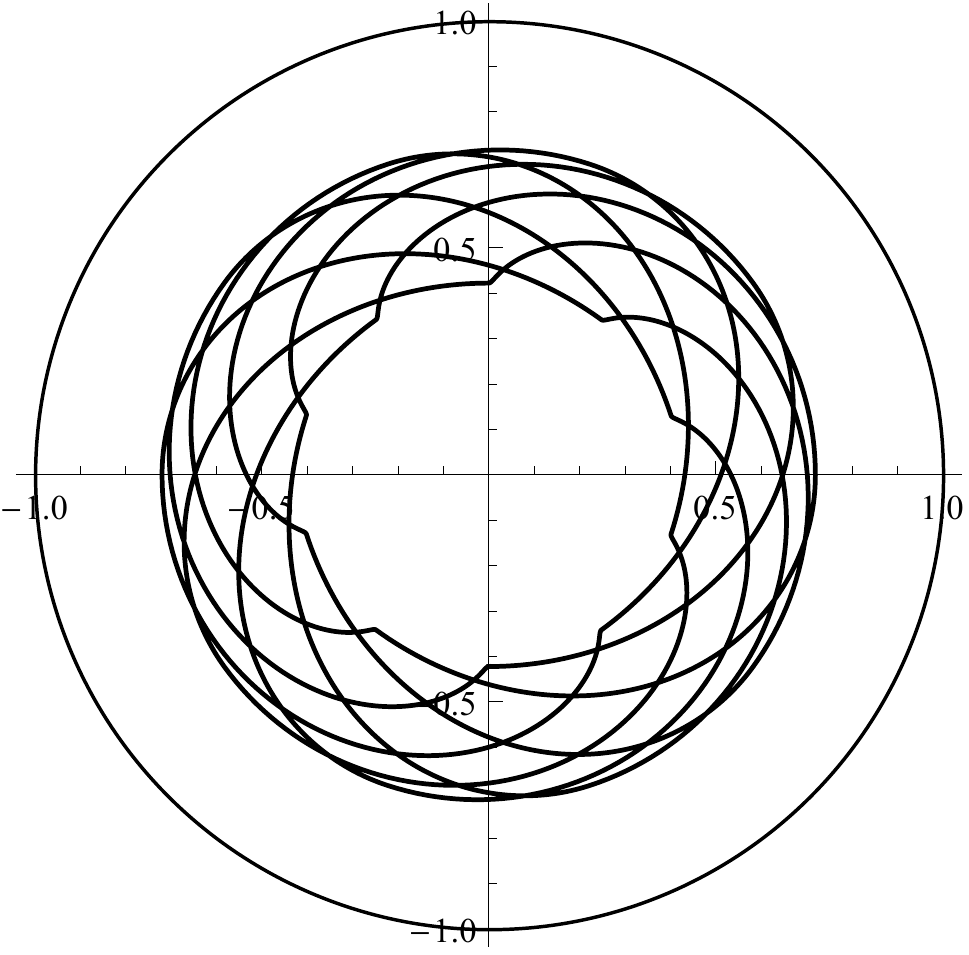}
  \end{center}
  \caption{Generating curve of a compact Otsuki minimal surface in
  the unit sphere $\SS^4$. In this figure, the support function $h$
  has initial conditions $h(0)=a_{0}=0.42231$ and $h'(0)=0$, and
  the period of $h$ is $T=1.4\pi$.}\label{fig:closed generating curve n3}
\end{figure}
%===================================================================

\subsection{Catenoids in $\SS^{n+1}$}

In this case we draw the generating curve for a
compact Otsuki minimal hypersurface in $\SS^{4}$.
The function $C(a)$ is given by
\begin{equation}\label{eq:constant C for n=3}
   C(a)=a^{2/3}(1-a^2)^{2/3}\ ,
\end{equation}
where $0<a<1/\sqrt{3}\approx{}0.57735$. Then the following equation
\begin{equation}\label{eq:upper_lower_limits}
   1-x^2-C(a)\left(\frac{1}{x^2}-1\right)^{1/3}=0
\end{equation}
has two solutions $a_0=a$ and $a_1=(-a + \sqrt{4 - 3 a^2})/2\in(1/\sqrt{3},1)$.
Let $a_0=0.42231$, then $a_1=0.71957$, and the period of $h(\theta)$ is
\begin{equation*}
   T=2\int_{a_0}^{a_1}\frac{dx}
     {\sqrt{1-x^2-C(a_0)\left(\dfrac{1}{x^2}-1\right)^{1/3}}}=4.39823\ ,
\end{equation*}
i.e. $T=1.4\pi$. Therefore we have a closed immersed generating curve
as shown in Figure~\ref{fig:closed generating curve n3},
the rotation hypersurface in $\SS^4$ is a compact immersed minimal
hypersurface.

\subsection{Catenoids in $\B^{n+1}$}\label{subsubsec:Hyperbolic_catenoids}
In this case, $c=-1$ and $f(y)=\sinh{}y$, so $f'(y)=\cosh{}y$.
Equation \eqref{eq:generating_curve_function} becomes
\begin{equation}\label{eq:hyperbolic_catenary}
   x(y)=\int_{a}^{y}\frac{1}{\cosh{}t}\cdot
        \frac{dt}{\sqrt{\left(\dfrac{\sinh{}t}{\sinh{}a}\right)^{2n-2}
        \cdot\left(\dfrac{\cosh{}t}{\cosh{}a}\right)^2-1}}\ ,
\end{equation}
where $a\leq{}y<\infty$.

Now let $n=3$, and let $a=0.2$ and $a=1$ respectively in
\eqref{eq:hyperbolic_catenary}, then we have two generating
curves as shown in Figure \ref{fig:generating curves in hyperbolic space}.

%===================================================================
\begin{figure}[htbp]
  \begin{center}
    \begin{minipage}{.5\textwidth}
    \centering
    \includegraphics[scale=0.85]{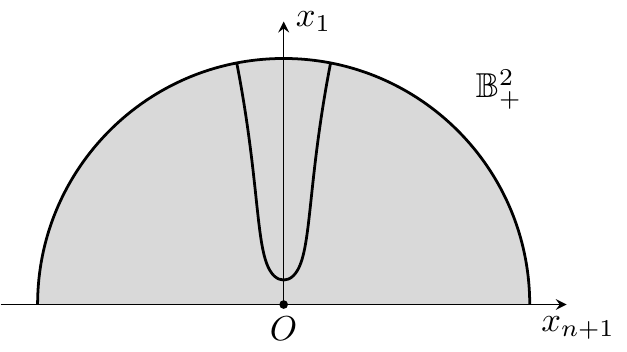}
    \end{minipage}%
    \begin{minipage}{.5\textwidth}
    \centering
    \includegraphics[scale=0.85]{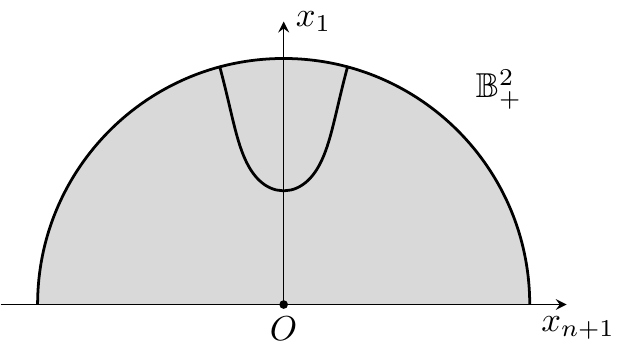}
    \end{minipage}%
  \end{center}
  \caption{Two generating curves for the catenoids in the hyperbolic space
  $\B^4$. In these figures, $a=0.2$ and $a=1$ respectively. The rotation
  axis is the $x_{n+1}$-axis.}\label{fig:generating curves in hyperbolic space}
\end{figure}
%===================================================================

\subsection{Catenoids in $\R^{n+1}$}\label{subsubsec:Euclidean_catenoids}
In this case, $c=0$ and $f(y)=y$, so $f'(y)=1$.
Equation \eqref{eq:generating_curve_function} becomes
\begin{equation}\label{eq:Euclidean_generating_curve}
   x(y)=\int_{a}^{y}
        \frac{dt}{\sqrt{(t/a)^{2n-2}-1}}\ ,
\end{equation}
where $a\leq{}y<\infty$. It's easy to see that the integral
\begin{equation}\label{eq:Euclidean_catenary}
   x(a,\infty)
        =\int_{a}^{\infty}
         \frac{dt}{\sqrt{(t/a)^{2n-2}-1}}
        =a\int_{1}^{\infty}\frac{dt}{\sqrt{t^{2n-2}-1}}
\end{equation}
is always finite if $n\geq{}3$, where $a>0$ is a constant.
Actually if $n\geq{}3$, then
\begin{equation*}
   \int_{1}^{\infty}\frac{dt}{\sqrt{t^{2n-2}-1}}\leq
   \int_{1}^{\infty}\frac{dt}{\sqrt{t^{4}-1}}<
   \int_{1}^{\infty}\frac{dt}{t\sqrt{t^{2}-1}}=
   \int_{0}^{\infty}\frac{dx}{\cosh{}x}=\frac{\pi}{2}\ ,
\end{equation*}
where we use the substitution $t=\cosh{}x$.
%(see Figure~\ref{fig:generating curves in Euclidean space}).

Now let $n=3$, and let $a=0.5$ and $a=1$ respectively in
\eqref{eq:Euclidean_generating_curve}, then we have two generating
curves as shown in Figure \ref{fig:generating curves in Euclidean space}.

%===================================================================
\begin{figure}[htbp]
  \begin{center}
    \begin{minipage}{.5\textwidth}
    \centering
    \includegraphics[scale=0.75]{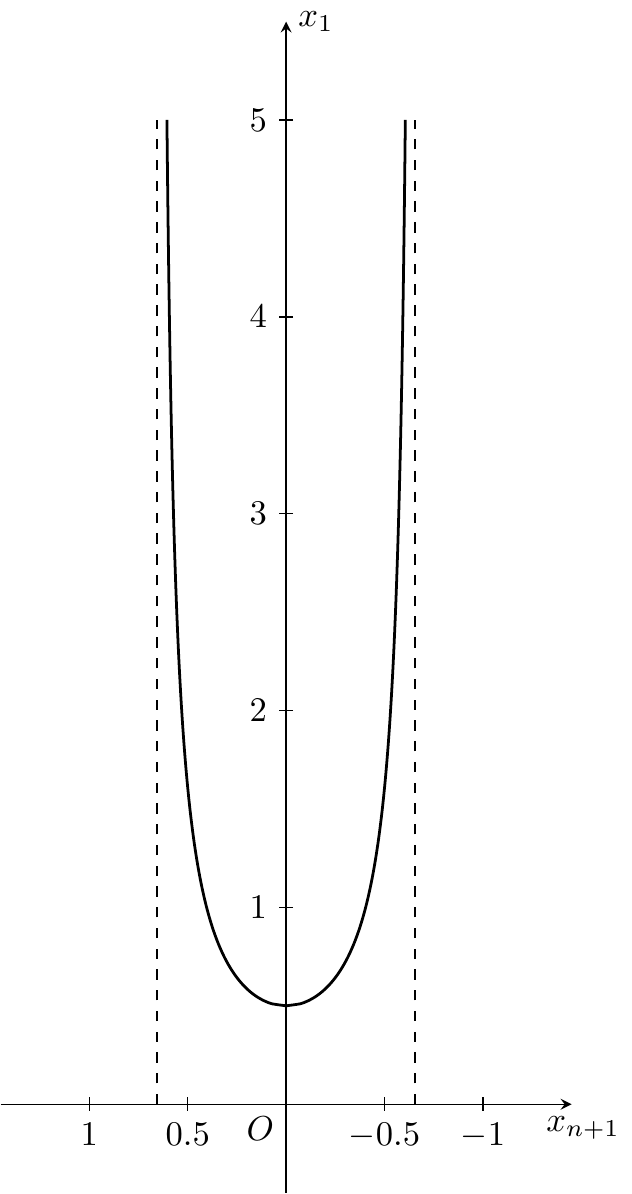}
    \end{minipage}%
    \begin{minipage}{.5\textwidth}
    \centering
    \includegraphics[scale=0.75]{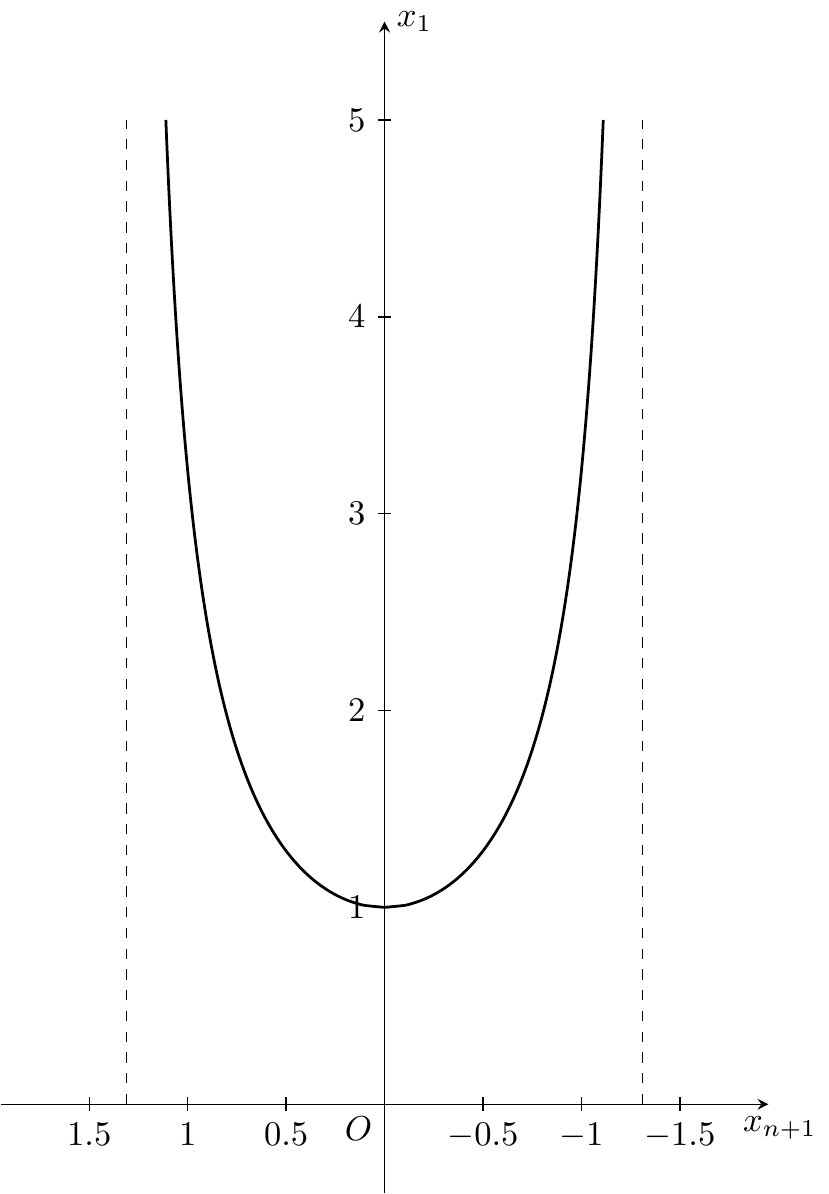}
    \end{minipage}%
  \end{center}
  \caption{Two generating curves for the catenoids in the Euclidean space
  $\R^4$. In these figures, $a=0.5$ and $a=1$ respectively. The rotation
  axis is the $x_{n+1}$-axis.}\label{fig:generating curves in Euclidean space}
\end{figure}
%===================================================================

%===========================================================================

%==================================================================================

%-------------------------------------------------------------------
\providecommand{\bysame}{\leavevmode\hbox to3em{\hrulefill}\thinspace}
\providecommand{\MR}{\relax\ifhmode\unskip\space\fi MR }
% \MRhref is called by the amsart/book/proc definition of \MR.
\providecommand{\MRhref}[2]{%
  \href{http://www.ams.org/mathscinet-getitem?mr=#1}{#2}
}
\providecommand{\href}[2]{#2}

%\aaa
\end{document}